\newcommand{\SELF}[1]{\todo[color=green!50]{#1}} 
\newtheorem{theorem}{Theorem}[section]
\newtheorem{proposition}[theorem]{Proposition}
\newtheorem{corollary}[theorem]{Corollary}
\newtheorem{lemma}[theorem]{Lemma}
\theoremstyle{remark}
\newtheorem{example}[theorem]{Example}
\theoremstyle{definition}
\newtheorem{definition}[theorem]{Definition}
\newcommand\inner[1] 		{\langle #1 \rangle}		
\def\Proj		{P}					
\def\ii			{\mathbf{i}}		
\def\jj			{\mathbf{j}}		
\def\II			{\mathcal{I}}		
\def\I			{I}					
\def\btheta		{\Phi} 
\def\lcontr		{\lrcorner}			
\def\rcontr		{\llcorner}			
\def\glcontr	{\rfloor}			
\def\grcontr	{\lfloor}			
\def\N{\mathds{N}}
\def\R{\mathds{R}}
\def\C{\mathds{C}}
\def\P{P} 
\def\PP{\mathds{P}}
\def\im	 				{\mathrm{i}}
\def\oo					{\infty}
\def\pperp			{\simperp}
\DeclareMathOperator{\Span}{span}
\DeclareMathOperator{\vol}{vol}
\DeclareMathOperator{\grade}{grade}
\def\Pythagorean	{Pyth\-a\-go\-re\-an}
\newcommand\acom{\mathrel{\scaleobj{0.8}{\boxtimes}}}
\begin{document}

\title{Blade Products and the Angle Bivector of Subspaces}

\author{Andr\'e L. G. Mandolesi 
	\thanks{Instituto de Matemática e Estatística, Universidade Federal da Bahia, Av. Adhemar de Barros s/n, 40170-110, Salvador - BA, Brazil. ORCID 0000-0002-5329-7034. E-mail: \texttt{andre.mandolesi@ufba.br}}}

\date{\today}

\maketitle

\begin{abstract}
	Principal angles are used to define an angle bivector of subspaces, which fully describes their relative inclination.
	Its exponential is related to the Clifford geometric product of blades, gives rotors connecting subspaces via minimal geodesics in Grassmannians, and decomposes giving Plücker coordinates, projection factors and angles with various subspaces.
	This leads to new geometric interpretations for this product and its properties, and to formulas relating other blade products (scalar, inner, outer, etc., including those of Grassmann algebra) to angles between subspaces. Contractions are linked to an asymmetric angle, while commutators and anticommutators involve hyperbolic functions of the angle bivector, shedding new light on their properties.

	\vspace{.5em}
	\noindent
	{\bf Keywords:}  Clifford algebra, geometric algebra, Grassmann algebra, blade product, angle between subspaces, angle bivector, asymmetric angle

	\vspace{3pt}
	
	\noindent
	{\bf MSC:} 	15A66,15A75	
\end{abstract}

\maketitle

\section{Introduction}

Much of the usefulness of dot and cross products of vectors comes from the formulas relating them to angles. 
Clifford geometric algebra has similar results for the scalar product and contraction of blades \cite{Dorst2007}, but the relation between other products and angles has been mostly ignored.
An exception is Hitzer's formula \cite{Hitzer2010a} relating the Clifford geometric product of blades to principal angles \cite{Jordan1875,Wedin1983}, but it is too complex to really help us understand the product, and its main purpose was to compute the angles. 

A difficulty in relating blade products to angles is that blades can represent high dimensional subspaces, for which there are various angle concepts. 
Measuring the separation of subspaces is useful in geometry, linear algebra, functional analysis, statistics, computer vision, data mining, etc., but a full description of their relative inclination requires a list of principal angles, which can be cumbersome. So, depending on the purpose, one usually takes the smallest, largest, or some function of principal angles describing whatever relation between the subspaces is most relevant \cite{Deutsch1995,Galantai2008,Ye2016}.
This can lead to misunderstandings, as distinct concepts are often called \emph{the} angle between subspaces, despite each having its own properties and limitations.

The geometric algebra literature has some (not fully equivalent) definitions for the angle between blades or subspaces \cite{Dorst2007,Hestenes1984clifford,Hitzer2010a}, but it does not seem to be well understood, being usually described only in simple cases, by comparison with usual angles in $\R^3$ (sometimes erroneously, as we show).
This angle is similar to the angles introduced in \cite{Gluck1967,Gunawan2005,Jiang1996,Wedin1983} and which measure separation of subspaces in terms of how volumes contract when orthogonally projected between them (so, projection factors \cite{Mandolesi_Pythagorean}).

These angles have been unified and generalized into an asymmetric angle of subspaces \cite{Mandolesi_Grassmann}, which has better properties. Its unusual asymmetry for subspaces of distinct dimensions turns out to be an advantage, making its use more efficient and leading to more general results. This angle has been linked to the Grassmann algebra and the geometry of Grassmannians, and here we show it is also deeply connected with the Clifford algebra.

Instead of a scalar angle, Hawidi \cite{Hawidi1966} proposed an angle operator carrying all data about the relative inclination of subspaces, but it was never widely adopted.
Fortunately, geometric algebra has a better way to store such information:
\SELF{even some absolute positioning info, as a rotation of the pair $(V,W)$ requires a rotation of $\btheta$}
describing relative inclination is akin to telling how to rotate one subspace into another, which calls for the use of rotors and bivectors.

Using principal angles and vectors, we define angle bivectors whose exponentials give rotors connecting subspaces through minimal geodesics in Grassmannians. When properly decomposed, such exponentials give angles and projection factors for various subspaces, and also Plücker coordinates.
Hitzer's formula is turned into a simple relation between the Clifford product of blades and the exponential of the angle bivector.

Clifford algebra gives strong results with such ease that the geometry behind them can  be missed just as easily. Our results provide new geometric interpretations for the Clifford product and some of its well known algebraic properties.
For example, Plücker coordinates stored in the product allow the invertibility of non-null blades, the relation $\|AB\|=\|A\|\|B\|$ for blades is linked to a \Pythagorean\ theorem for volumes \cite{Mandolesi_Pythagorean}, and the dualities \cite{Dorst2007} $(AB)^*=AB^*$, $(A\wedge B)^* = A\glcontr B^*$ and $(A\glcontr B)^* = A\wedge B^*$ reflect a symmetry in the exponentials of angle bivectors.

The formula relating the Clifford product to the angle bivector yields others for the various geometric algebra products of blades: Lounesto's asymmetric contractions \cite{Lounesto1993} are linked to the asymmetric angle, the outer product to a complementary angle, and the scalar product, Hestenes inner product and Dorst's dot product \cite{Dorst2002} to symmetrized angles.
Blade commutators and anticommutators are related to hyperbolic functions of the angle bivector. 
We also give formulas for Grassmann algebra products, which are used in \cite{Mandolesi_Grassmann} to obtain formulas for computing asymmetric angles and identities that lead to properties of projection factors \cite{Mandolesi_Pythagorean}.

The symmetrized angle related to Hestenes inner product has worse properties than the asymmetric one, and this supports Dorst's case for the use of contractions instead of that product.
Their asymmetry is directly linked to that of asymmetric angles, and in both cases it leads to better results with simpler proofs.

\Cref{sc:preliminaries} reviews some concepts and results.
\Cref{sc:Angle bivector} introduces the angle bivector and studies its exponential.
\Cref{sc:geometric product} relates the Clifford product to the angle bivector, and interprets geometrically some of its properties.
\Cref{sc:Other products} relates other geometric algebra products to angles.
\Cref{sc:Grassmann products} does the same for Grassmann algebra products, and requires only \Cref{sc:preliminaries}.
\Cref{sc:Hyperbolic Functions} develops some properties of hyperbolic functions of multivectors.

\section{Preliminaries}\label{sc:preliminaries}

In this article\footnote{Except for \Cref{sc:Grassmann products}, which includes complex spaces. Any changes needed for the complex case are indicated in footnotes (only in \Cref{sc:preliminaries}).}, $X$ is a $n$-dimensional Euclidean space. A \emph{$p$-subspace} is a subspace of dimension $p$.
For a multivector $M\in\bigwedge X$, $\inner{M}_p \in \bigwedge^p X$ denotes its component of grade $p$.

A \emph{$p$-blade} ($p=1,2,\ldots$) is a simple multivector $B=v_1\wedge\cdots\wedge v_p \in\bigwedge^p X$, with $v_1,\ldots,v_p\in X$. 
Its \emph{reversion} is $\tilde{B} = v_p\wedge\cdots\wedge v_1 = (-1)^{\frac{p(p-1)}{2}} B$, which extends linearly to an involution of $\bigwedge X$.
If $B\neq 0$, its $p$-subspace is $[B]=\Span(v_1,\ldots,v_p)$, and $\bigwedge^p[B]=\Span(B)$.
A  scalar $B\in\bigwedge^0 X$ is a $0$-blade, with $[B]=\{0\}$. 
Note that $B=0$ is a $p$-blade for all $p$, and $[0]=\{0\}$. 

For $M,N\in\bigwedge X$, the \emph{scalar product} \cite{Hestenes1984clifford} $M*N = \inner{MN}_0$ is related to the Grassmann algebra inner product by\footnote{In the complex case, the $M$ on the left is also conjugated, but as the same happens wherever we use this product, one can simply always replace $\tilde{M}*N$ with $\inner{M,N}$.}\ $\tilde{M}*N=\inner{M,N}$, and the norm is $\|M\| = \sqrt{\tilde{M}*M}$.
For $B=v_1\wedge\cdots\wedge v_p$, $\|B\| = \sqrt{\tilde{B}B} = \sqrt{\det (v_i\cdot v_j)}$ gives\footnote{For complex blades, $\|B\|^2$ gives the $2p$-dimensional volume of the parallelotope spanned by $v_1,\im v_1,\ldots,v_p, \im v_p$.\label{fn:complex blade norm}}
the $p$-dimensional volume of the parallelotope spanned by $v_1,\ldots,v_p$. 

For subspaces $V,W\subset X$, $\Proj_W:X\rightarrow W$ and $\Proj^V_W:V\rightarrow W$ are orthogonal projections.
We also write $P_B$ for $P_{[B]}$, and extend $P=\Proj_W$ to an orthogonal projection $P:\bigwedge X \rightarrow \bigwedge W$, with $P(v_1\wedge\cdots\wedge v_p) = Pv_1\wedge\cdots\wedge Pv_p$.

A little more geometric algebra notation, for those who are mainly interested in \Cref{sc:Grassmann products}: for vectors $v,w\in X$, the (dot) product $v\cdot w$ equals the Grassmann algebra inner product $\inner{v,w}$, and the Clifford geometric product (indicated by juxtaposed elements, with no product symbol between them) is $vw=v\cdot w+v\wedge w$.
When $v_1,\ldots,v_k\in X$ are orthogonal, the geometric product $v_1 v_2\cdots v_k$  equals the exterior product $v_1\wedge v_2\wedge\cdots\wedge v_k$.

\subsection{Principal Angles and Vectors}

In high dimensions, no single scalar angle can fully describe the relative inclination of subspaces. This requires a list of principal angles \cite{Afriat1957,Galantai2006,Gluck1967,Golub2013,Wedin1983}, also called canonical or Jordan angles. These angles were introduced in 1875 by Jordan \cite{Jordan1875}, and have important applications in statistics (in Hotelling's theory of canonical correlations \cite{Hotelling1936}), numerical analysis and other areas.

\begin{definition}\label{pr:inner ei fj}
	Let $V,W\subset X$ be nonzero subspaces, $p=\dim V$, $q=\dim W$ and $m=\min\{p,q\}$.
	Their \emph{principal angles} are $0\leq \theta_1\leq\cdots\leq\theta_m\leq\frac \pi 2$, and orthonormal bases $\beta_V = (e_1,\ldots,e_p)$ and $\beta_W = (f_1,\ldots,f_q)$ are \emph{associated principal bases}, formed by \emph{principal vectors}, if, for all $1\leq i \leq p$ and $1\leq j \leq q$,
	\begin{equation}\label{eq: ei fj}
		e_i\cdot f_j = \delta_{ij} \cos\theta_i.
	\end{equation}
\end{definition}

We also say that $\beta_W$ is a principal basis of $W$ w.r.t.\! $V$.
Note that
\begin{equation}\label{eq:ProjVW}
	\Proj_W e_i=\begin{cases}
		f_i\,\cos\theta_i \hspace{4pt} \text{ if } i\leq m, \\
		0 \hspace{34pt}\text{ if } i>m.
	\end{cases}
\end{equation}

Principal bases and angles can be obtained via a singular value decomposition, in a method introduced by Björck and Golub \cite{Bjorck1973}  (see also \cite{Drmac2000,Galantai2006,Golub2013}). The $e_i$'s and $f_i$'s are orthonormal eigenvectors of $P^W_V P^V_W$ and $P^V_W P^W_V$, respectively, and the $\cos\theta_i$'s are the square roots of the eigenvalues of $P^W_V P^V_W$, if $p\leq q$, or $P^V_W P^W_V$ otherwise (Fig. \ref{fig:principal angles}).

\begin{figure}
	\centering
	\includegraphics[width=0.8\linewidth]{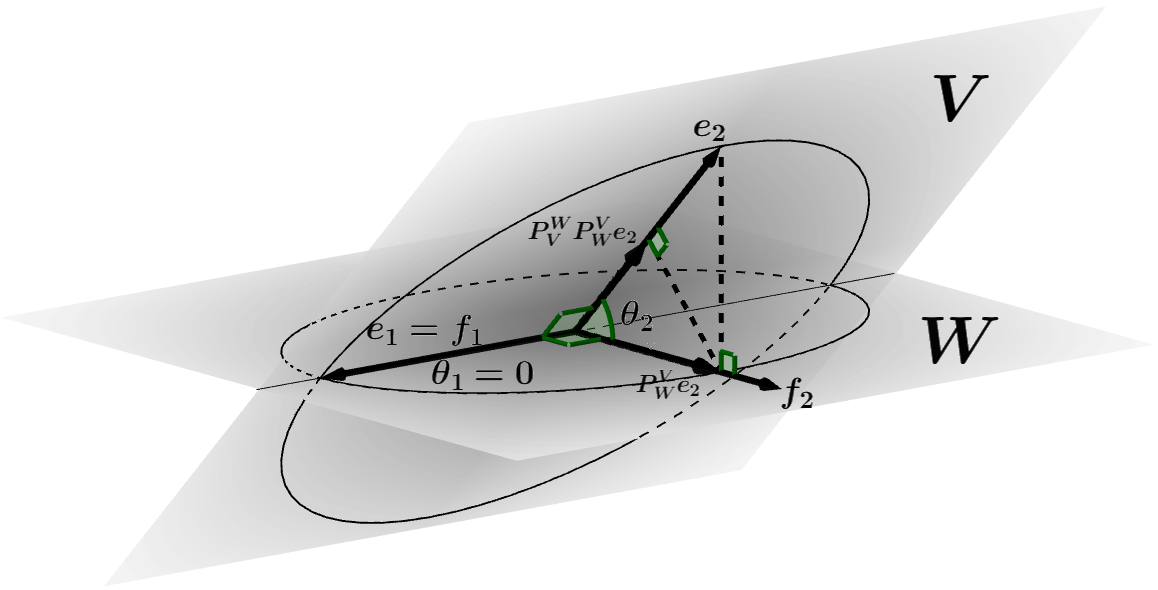}
	\caption{Principal angles and vectors for planes $V,W\subset \R^3$.}
	\label{fig:principal angles}
\end{figure}

A recursive characterization of principal angles is that, for $1\leq i\leq m$,  
$\theta_i=\min\{\theta_{v,w} : v\in V\backslash\{0\}, w\in W\backslash\{0\}, v\cdot e_j=w\cdot f_j=0 \  \forall j<i\}$, where $\theta_{v,w}$ is the angle\footnote{In complex spaces there are different concepts of angle between vectors \cite{Scharnhorst2001}. For the present characterization, Euclidean or Hermitian angles give the same result \cite{Galantai2006}.} 
between $v$ and $w$, and unit vectors $e_i\in V$ and $f_i\in W$ are chosen so that \eqref{eq: ei fj} holds for all $j\leq i$. 
A geometric interpretation is that the unit sphere of $V$ projects orthogonally to an ellipsoid in $W$ and, for $i\leq m$, $e_i$ projects onto a semi-axis of length $\cos\theta_i$ along $f_i$ (Fig. \ref{fig:principal angles}).

Note that the number of null principal angles, for which $e_i=f_i$, equals $\dim (V\cap W)$. Also, $V\perp W$ if, and only if, all principal angles are $\frac\pi2$.

Principal angles are uniquely determined, but principal bases are not.
If $\theta_i$ is not repeated then $e_i$ and $f_i$ are determined up to a sign, but if some $\theta_i$'s are equal then orthogonal transformations of the corresponding eigenspaces in $V$ and $W$ yield new principal bases.
For $i\leq m$ with $\theta_i\neq \frac\pi2$, a choice of $e_i$ determines $f_i = P_W e_i/\|P_W e_i\|$.
Any $e_i$'s or $f_i$'s with $i>m$ can be chosen freely to complete an orthonormal basis.

\subsection{Partial Orthogonality}

Let $V,W\subset X$ be subspaces. 
As usual, we write 
$V\perp W$ if $v\cdot w=0$ for all $v\in V$ and $w\in W$, and $W^\perp$ is the orthogonal complement of $W$.
We will also need a weaker concept of orthogonality \cite{Mandolesi_Grassmann}.

\begin{definition}
	$V$ is \emph{partially orthogonal} to $W$ (we write $V\pperp W$) if there is a nonzero $v\in V$ such that $v\cdot w=0$ for all $w\in W$. 
\end{definition}

\begin{proposition}\label{pr:orthogonality}
	For any subspaces $V,W\subset X$:  
	\begin{enumerate}[i)]
		\item $V \pperp W \ \Leftrightarrow\ \dim V>\dim W$ or a principal angle of $V$ and $W$ is $\frac \pi 2$.\label{it:pperp dim pi/2}
		\item $V \pperp W \ \Leftrightarrow\ \dim V > \dim P_W(V)$. \label{it:dim P(V)}
		\item If $\dim V=\dim W$ then $V \pperp W  \Leftrightarrow W\pperp V$. \label{it:pperp equal dim}
	\end{enumerate}
\end{proposition}
\begin{proof}
	\emph{(\ref{it:pperp dim pi/2})} When $\dim V\leq \dim W$, the largest principal angle equals the largest angle between a nonzero $v\in V$ and $W$.
	\emph{(\ref{it:dim P(V)})} Immediate.
	\emph{(\ref{it:pperp equal dim})} Follows from \emph{\ref{it:pperp dim pi/2}}.
\end{proof}

Note that $\pperp$ is not a symmetric relation when $\dim V\neq\dim W$: any plane is partially orthogonal to a line, but the converse is not true.


\begin{definition}
	Let $ A, B\in\bigwedge X$ be blades. 
	\begin{enumerate}[i)]
		\item If $[ A]\pperp [ B]$ we say $ A$ is \emph{partially orthogonal} to $ B$, and write $ A\pperp  B$.\label{it:partial orthog}
		\item If $[ A]\perp [ B]$ we say $ A$ and $ B$ are \emph{completely orthogonal}. \label{it:complete orthog}
	\end{enumerate}
\end{definition}

Orthogonality in the sense of $A*B=0$ is weaker than (\ref{it:complete orthog}), and for nonzero blades of same grade it equals (\ref{it:partial orthog})  (see \Cref{pr:pperp A*B=0} below).


\subsection{Principal Decomposition and Relative Orientation}

Let $ A\in\bigwedge^p X$ and $ B\in\bigwedge^q X$ be nonzero blades, and $\beta_A = (e_1,\ldots,e_p)$ and $\beta_B = (f_1,\ldots,f_q)$ be associated principal bases of $[A]$ and $[B]$.

\begin{definition}
	A \emph{principal decomposition} of $ A$ and $ B$ is
	\begin{equation}\label{eq:principal decomposition}
		\begin{aligned}
			A &= \epsilon_A\| A\| e_1 e_2\cdots e_p, \\
			B &=  \epsilon_ B\| B\| f_1 f_2\cdots f_q,
		\end{aligned}
	\end{equation}
	where $\epsilon_ A, \epsilon_ B = \pm 1$. 
	We also define\footnote{In complex spaces $\epsilon_ A$ and $\epsilon_ B$ are phase factors $e^{i\varphi}$, and we define $\epsilon_{ A, B}=\bar{\epsilon}_ A\,\epsilon_ B$.}
	$\epsilon_{ A, B}=\epsilon_ A\epsilon_ B$, which we call \emph{relative orientation} of $A$ and $B$ (w.r.t. $\beta_A$ and $\beta_B$). 
\end{definition}

\begin{lemma}\label{pr:A*B theta_i}
	If $p=q$ then $\tilde{A}*B = \epsilon_{A,B} \|A\|\|B\|\prod_{i=1}^p \cos\theta_i$. 
\end{lemma}
\begin{proof}
	Follows from \eqref{eq: ei fj} and \eqref{eq:principal decomposition}.
\end{proof}

\begin{proposition}\label{pr:pperp A*B=0}
	For nonzero blades of same grade $A, B\in\bigwedge^p X$ we have $ A\pperp  B \Leftrightarrow  A*B=0$. 
\end{proposition}
\begin{proof}
	Follows from \Cref{pr:A*B theta_i} and \Cref{pr:orthogonality}\emph{\ref{it:pperp dim pi/2}}.
\end{proof}

If $A*B\neq 0$
\SELF{so $p=q$}
then $\epsilon_{ A, B}=\frac{\tilde{A}*B}{|A*B|}$ is uniquely determined by the orientations of $A$ and $B$, being such that $\epsilon_{A,B} P_{B} A$ has the same orientation as $B$ (i.e., $\epsilon_{A,B} P_{B} A = cB$ for some $c>0$).
\SELF{$\epsilon_{ A, B}P A=\frac{\inner{A, B}}{|\inner{A, B}|}\inner{ B, A} B/\| B\|^2 = |\inner{ A, B}| B/\| B\|^2 $}
If $A*B= 0$ (distinct grades, or partially orthogonal blades), the orientations of $A$ and $B$ can not really be compared, and $\epsilon_{A, B}$ becomes less meaningful, depending on the choice of principal bases, but in a way that makes it useful to track orientation changes in them.


\subsection{Asymmetric Angle of Subspaces}\label{sc:asymmetric angle}

The asymmetric angle\footnote{Previously \cite{Mandolesi_Grassmann,Mandolesi_Pythagorean} called \emph{Grassmann angle}, for its links with Grassmann algebra, but this has given the false idea that it is a multivector. Since what sets this angle apart from similar ones is its asymmetry, the new name is more adequate.}
\cite{Mandolesi_Grassmann} of subspaces $V,W\subset X$ measures their separation  in terms of a projection factor $\pi_{V,W}$ \cite{Mandolesi_Pythagorean}, describing how volumes in $V$ contract when orthogonally projected on $W$.

\begin{definition}
	Let\footnote{In complex spaces let $p=\dim V_\R=2\dim V$, where $V_\R$ is the underlying real space.} 
	$p=\dim V$, $S\subset V$ be a $p$-dimensional parallelotope, and $\vol_p$ be the $p$-dimensional volume. The \emph{projection factor} of $V$ on $W$ is $\pi_{V,W}=\frac{\vol_p P_W(S)}{\vol_p S}$.
\end{definition}

\begin{definition}
	The \emph{asymmetric angle}	$\Theta_{V,W}\in[0,\frac \pi 2]$ of $V$ with $W$ is given\footnote{In complex spaces we define $\cos^2\Theta_{V,W}=\pi_{V,W}$, to match the relation between blade norm and volume (\cref{fn:complex blade norm}). This preserves properties of the angle, but changes its interpretation, as it is not the angle of the underlying real subspaces \cite{Mandolesi_Grassmann}. Alternative (but less intuitive) definitions for real and complex spaces are \Cref{pr:properties Grassmann}\,\emph{\ref{it:Theta Proj}}, \emph{\ref{it:Theta in Lambda}} or \emph{\ref{it:Theta prod cos}}.} 
	by $\cos\Theta_{V,W}=\pi_{V,W}$.
\end{definition}

In simple cases having a clear and unique concept of the angle between the subspaces, $\Theta_{V,W}$ coincides with it (e.g., when $V$ is a line, or $V$ and $W$ are hyperplanes).
This angle has many useful properties \cite{Mandolesi_Grassmann}, of which we mention just a few.

\begin{proposition}\label{pr:properties Grassmann}
	Given subspaces $V,W\subset X$ with principal angles $\theta_1,\ldots,\theta_m$, where $m=\min\{p,q\}$ for $p=\dim V$ and $q=\dim W$, let $A,B\in\bigwedge X$ be nonzero blades such that $[A]=V$ and $[B]=W$. Then:
	\begin{enumerate}[i)]
		\item $\cos\Theta_{V,W}=\frac{\|P_B A\|}{\| A\|}$. \label{it:Theta Proj}
		\item $\Theta_{V,W} = \Theta_{V,P_W(V)}$. \label{it:Theta PWV}
		\item $\Theta_{V,W}$ is the angle\footnote{The usual angle between a line $L$ and a subspace $U$, defined (even in complex spaces \cite{Scharnhorst2001}) as that between a nonzero $v\in L$ and its projection on $U$ (or $\frac \pi 2$ if $P_U v = 0$).}
		in $\bigwedge^p X$ between $\bigwedge^p V$ and $\bigwedge^p W$. \label{it:Theta in Lambda}
		\item If $\dim V=\dim W$ then $\cos\Theta_{V,W}=\frac{|A*B|}{\|A\|\|B\|}$, and $\Theta_{V,W}=\Theta_{W,V}$. \label{it:Theta star} 
		\item If $p>q$ then $\Theta_{V,W}=\frac\pi 2$, otherwise \label{it:Theta prod cos}
		\begin{equation}\label{eq:Theta prod cos}
			\cos\Theta_{V,W}=\prod_{i=1}^m \cos\theta_i.
		\end{equation}
		\item $\Theta_{V,W}=0 \ \Leftrightarrow\ V\subset W$.\label{it:Theta 0}
		\item $\Theta_{V,W}=\frac \pi 2 \ \Leftrightarrow\ V \pperp W$.\label{it:Theta pi2}
	\end{enumerate}
\end{proposition}
\begin{proof}
	\emph{(\ref{it:Theta Proj})} Follows from the relation between blade norm and volume.
	\emph{(\ref{it:Theta PWV})} Follows from \emph{\ref{it:Theta Proj}}.
	\emph{(\ref{it:Theta in Lambda})} $\bigwedge^p V = \Span(A)$ and, by \emph{\ref{it:Theta Proj}}, $\Theta_{V,W}$ is the angle between $A$ and its projection on $\bigwedge^p W$.
	\emph{(\ref{it:Theta star})} In this case $\bigwedge^p W = \Span(B)$ and so, by \emph{\ref{it:Theta in Lambda}}, $\Theta_{V,W}$ is the smallest angle between $A$ and $\pm B$.
	\emph{(\ref{it:Theta prod cos})} Follows from \emph{\ref{it:Theta Proj}}, since $P_B A=0$ if $p>q$, otherwise $\|P_B A\| = \|A\| \prod_{i=1}^m \cos\theta_i$, by \eqref{eq:ProjVW} and \eqref{eq:principal decomposition}.
	\emph{(\ref{it:Theta 0})} Follows from \emph{\ref{it:Theta prod cos}}.
	\emph{(\ref{it:Theta pi2})} Follows from \emph{\ref{it:Theta prod cos}} and \Cref{pr:orthogonality}\emph{\ref{it:pperp dim pi/2}}.
\end{proof}


As its name indicates, $\Theta_{V,W}$ is asymmetric: in general, $\Theta_{V,W} \neq \Theta_{W,V}$ if $\dim V\neq \dim W$.
This feature sets it apart from similar angles which also measure volume contraction, but in projections from the smaller subspace to the larger one (e.g., those in \cite{Gluck1967,Gunawan2005,Jiang1996,Wedin1983} and in \Cref{sc:GA angles}).
This asymmetry may seem odd, but it reflects the lack of symmetry between subspaces of distinct dimensions.
A way to understand it is to note that, as \Cref{pr:properties Grassmann}\emph{\ref{it:Theta 0}} indicates, $\Theta_{V,W}$ measures, in a sense, how far $V$ is from being contained in $W$. If $\dim V > \dim W$, no rotation of $V$ will bring this any closer to happening, and as $V$ will always have a nonzero vector orthogonal to $W$, $\Theta_{V,W}$ will never be less than $\frac\pi 2$, by \Cref{pr:properties Grassmann}\emph{\ref{it:Theta pi2}}. On the other hand, $W$ can be rotated into $V$, and $\Theta_{W,V}$ can assume any value in $[0,\frac\pi2]$.

The asymmetry turns out to be useful, leading to more general results with simpler proofs, as the angle `keeps track' of special cases depending on which subspace is larger.
For example, if we had defined the angle in a symmetric way (projecting from the smaller subspace to the larger one, as in \cite{Dorst2007}, or using \eqref{eq:Theta prod cos} without the exception for $p>q$, as in \cite{Hitzer2010a}), the results in \Cref{pr:properties Grassmann} would require extra conditions.

Like other angles between high-dimensional subspaces, $\Theta_{V,W}$ captures some properties of their relative inclination, but misses other information, and it is important to note its peculiarities. Given two pairs of subspaces $(V,W)$ and $(V',W')$, even if all dimensions are the same and $\Theta_{V,W} = \Theta_{V',W'}$ there may be no orthogonal transformation of $X$ matching the pairs (this requires both to have the same list of principal angles \cite{Gluck1967}). And if $\dim V>1$ then $\Theta_{V,W}$ tends to be larger than any (usual) angle between a line of $V$ and $W$.

\subsubsection{Related Angles}

Other angles related to $\Theta_{V,W}$ are useful at times.

\begin{definition}
	The \emph{max- and min-symmetrized angles} are given, respectively, by 
	$\hat{\Theta}_{V,W} = \max\{\Theta_{V,W},\Theta_{W,V}\}$ and $\check{\Theta}_{V,W} = \min\{\Theta_{V,W},\Theta_{W,V}\}$. 
\end{definition}

Symmetrizing with $\min$ corresponds to projecting from the smaller subspace to the larger one, and leads to worse properties: $\check{\Theta}_{V,W}$ does not satisfy a triangle inequality, while $\hat{\Theta}_{V,W}$ gives a metric on the full Grassmannian of all subspaces of $X$ \cite{Mandolesi_Grassmann}.
On the other hand,  we always have $\hat{\Theta}_{V,W}=\frac\pi2$ for different dimensions, which is less helpful. 
The asymmetric $\Theta_{V,W}$ strikes a good balance between nice properties and useful information, giving the Fubini-Study metric on the Grassmannian of subspaces of a given dimension, and an asymmetric metric on the full Grassmannian  \cite{Mandolesi_Grassmann}.

\begin{definition}
	The \emph{complementary angle} is $\Theta_{V,W}^\perp=\Theta_{V,W^\perp}$.
\end{definition}

The term `complementary' refers to the orthogonal complement $W^\perp$, and this should not be confused with the usual complement of an angle.
When $V$ is a line we do have $\Theta_{V,W}^\perp=\frac \pi 2-\Theta_{V,W}$, 
but, in general, the relation between these two angles is complicated \cite{Mandolesi_Grassmann}.
Assuming for simplicity that $V$, $W$ and $W^\perp$ have the same dimension $p>1$, this can be understood by noting that, in the Plücker embedding of the  Grassmannian of $p$-subspaces, these angles measure geodesic distances in the ambient space \cite{Mandolesi_Grassmann}, so if $V$ is not in the geodesic from $W$ to $W^\perp$ then $\Theta_{W,V}+\Theta_{V,W^\perp} > \Theta_{W,W^\perp}=\frac\pi2$.

The following result was proven in \cite{Mandolesi_Grassmann} by showing (among other things) that each $\theta_i\neq 0$ gives a principal angle $\frac\pi2-\theta_i$ of $V$ and $W^\perp$, while a $\theta_i=0$ implies $V\pperp W^\perp$.
We now give a simpler proof using \Cref{pr:norm wedge product} (whose proof, in \Cref{sc:Grassmann products}, uses only \Cref{pr:properties Grassmann}\emph{\ref{it:Theta Proj}}, so there is no circularity).

\begin{proposition}\label{pr:complementary product sines}
	Given subspaces $V,W\subset X$ with principal angles $\theta_1,\ldots,\theta_m$, where $m=\min\{\dim V,\dim W\}$, we have
	\begin{equation}\label{eq:Theta perp prod sin}
		\cos\Theta_{V,W}^\perp=\prod_{i=1}^m \sin\theta_i.
	\end{equation}
\end{proposition}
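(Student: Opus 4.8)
The plan is to compute $\cos\Theta_{V,W}^\perp=\cos\Theta_{V,W^\perp}$ directly from characterization~\eqref{eq:norm projection blade}, applied to the subspace $W^\perp$ and evaluated on a unit blade made of principal vectors; the route through characterization (v) (identifying the principal angles of $V$ with $W^\perp$) would also work but needs more bookkeeping. So let $(e_1,\ldots,e_p)$ and $(f_1,\ldots,f_q)$ be associated principal bases of $V$ and $W$ with principal angles $\theta_1\le\cdots\le\theta_m$, $m=\min\{p,q\}$. Then $\nu=e_1\wedge\cdots\wedge e_p$ is a blade representing $V$ with $\|\nu\|^2=\det(\langle e_i,e_j\rangle)=1$; writing $P=\Proj_W$, so that $\Proj_{W^\perp}=I-P$, the projection-of-blade formula and \eqref{eq:norm projection blade} give $\cos\Theta_{V,W^\perp}=\|(I-P)\nu\|$, with $(I-P)\nu=(I-P)e_1\wedge\cdots\wedge(I-P)e_p$.

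Next I would evaluate the factors. By \cref{pr:ProjVW}, $Pe_i=f_i\cos\theta_i$ for $i\le m$ and $Pe_i=0$ for $i>m$, so $(I-P)e_i=e_i$ when $i>m$, while for $i\le m$ a one-line expansion of $\langle e_i-\cos\theta_i f_i,\,e_i-\cos\theta_i f_i\rangle$ — using $\langle e_i,f_i\rangle=\cos\theta_i\in\R$ — gives $\|(I-P)e_i\|=\sin\theta_i$. If some $\theta_i=0$ this factor is the zero vector, hence $(I-P)\nu=0$ and the asserted identity reduces to $0=0$; this handles the degenerate case $V\cap W\neq\{0\}$. Otherwise set $u_i=(I-P)e_i/\sin\theta_i$ for $i\le m$ and $u_i=e_i$ for $m<i\le p$, so that $(I-P)\nu=\bigl(\prod_{i=1}^m\sin\theta_i\bigr)\,u_1\wedge\cdots\wedge u_p$, and therefore $\cos\Theta_{V,W^\perp}=\bigl(\prod_{i=1}^m\sin\theta_i\bigr)\|u_1\wedge\cdots\wedge u_p\|$ (using $\sin\theta_i\ge0$). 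Thus the claim follows once $u_1,\ldots,u_p$ are shown to be orthonormal.

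That orthonormality check is the only remaining work, and it is routine: each $u_i$ is a unit vector by construction, and for $i\neq j$ expanding $\langle u_i,u_j\rangle$ in terms of $e_i,f_i,e_j,f_j$ reduces it to a combination of $\langle e_i,e_j\rangle$, $\langle e_i,f_j\rangle$, $\langle f_i,e_j\rangle$ and $\langle f_i,f_j\rangle$, all zero because the two principal bases are orthonormal and $\langle e_k,f_l\rangle=0$ for $k\neq l$ (this also covers indices $>m$). Hence $\|u_1\wedge\cdots\wedge u_p\|=\sqrt{\det(\langle u_i,u_j\rangle)}=1$, giving $\cos\Theta_{V,W}^\perp=\prod_{i=1}^m\sin\theta_i$. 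The argument is uniform over the real and complex cases, since \eqref{eq:norm projection blade}, \cref{pr:ProjVW} and the relation $\langle f_i,e_i\rangle=\overline{\langle e_i,f_i\rangle}$ hold in both; the only real obstacle is keeping the index ranges $i\le m$ versus $i>m$ and the $\theta_i=0$ case straight.
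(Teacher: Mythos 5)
Your proof is correct, but it takes a genuinely different route from the paper's. You compute $\cos\Theta_{V,W^\perp}=\|\Proj_{W^\perp}\nu\|$ head-on from characterization \eqref{eq:norm projection blade}: project each principal vector, pull out the factors $\sin\theta_i$, and verify that the normalized images $u_1,\ldots,u_p$ form an orthonormal system so that the residual blade has unit norm. The paper instead routes the computation through \cref{pr:norm wedge product}, i.e.\ the identity $\|\nu\wedge\omega\|=\|\nu\|\|\omega\|\cos\Theta^\perp_{V,W}$: taking $\nu$ and $\omega$ to be the unit principal blades, it regroups $\nu\wedge\omega$ into the mutually orthogonal principal planes and reads off $\|\nu\wedge\omega\|=\prod_i\|e_i\wedge f_i\|=\prod_i\sin\theta_i$ via \cref{pr:norm orthogonal wedge}. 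The two computations are secretly cousins --- the proof of \cref{pr:norm wedge product} already replaces $\nu$ by $\Proj_{W^\perp}\nu$ --- but yours is the more elementary and self-contained: it needs only \cref{pr:ProjVW} and a Gram-determinant check, at the cost of the explicit orthonormality bookkeeping (which you do handle correctly, including the Hermitian case and the degenerate $\theta_i=0$ situation). The paper's version buys brevity by offloading all of that onto the exterior-product theorem and the mutual orthogonality of principal planes, which is precisely the point being advertised there: that \eqref{eq:exterior product} yields \eqref{eq:Theta perp prod sin} almost for free. Both arguments are sound; yours would serve as an independent verification that does not presuppose \cref{pr:norm wedge product}.
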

\begin{proof}
	Let $ A=e_1\wedge\cdots\wedge e_p$ and $ B=f_1\wedge\cdots\wedge f_q$ for principal bases $(e_1,\ldots,e_p)$ and $(f_1,\ldots,f_q)$ of $V$ and $W$, and assume, without loss of generality, $p\leq q$. 
	\Cref{pr:norm wedge product} gives
	$\cos\Theta_{V,W}^\perp = \| A\wedge  B\|$, and with \eqref{eq: ei fj} we obtain $\| A\wedge  B\| =\|e_1\wedge f_1\|\cdots\|e_p\wedge f_p\| \|f_{p+1}\|\cdots\|f_q \|  = \sin\theta_1\cdots\sin\theta_p$.
\end{proof}


\begin{proposition}\label{pr:complementary simple cases}
	Let $V,W\subset X$ be subspaces.
	\begin{enumerate}[i)]
		\item $\Theta_{V,W}^\perp=0 \ \Leftrightarrow\ V\perp W$.\label{it:Theta perp 0}
		\item $\Theta_{V,W}^\perp=\frac \pi 2 \ \Leftrightarrow\ V\cap W\neq\{0\}$. \label{it:Theta perp pi 2}
		\item $\Theta_{V,W}^\perp = \Theta_{W,V}^\perp$. \label{it:symmetry complementary}
	\end{enumerate}
\end{proposition}
\begin{proof}
	\emph{(\ref{it:Theta perp 0})} By \eqref{eq:Theta perp prod sin}, $\Theta_{V,W}^\perp=0  \Leftrightarrow \theta_i=\frac\pi2$ for all $i$.
	\emph{(\ref{it:Theta perp pi 2})} By \eqref{eq:Theta perp prod sin}, $\Theta_{V,W}^\perp=\frac\pi2 \Leftrightarrow \theta_i=0$ for some $i$.
	\emph{(\ref{it:symmetry complementary})} Follows from \eqref{eq:Theta perp prod sin}, as principal angles do not depend on the order of $V$ and $W$. We can also obtain it directly from \Cref{pr:norm wedge product}.
\end{proof}

Note that \eqref{eq:Theta perp prod sin} holds regardless of the dimensions of $V$ and $W$, unlike \Cref{pr:properties Grassmann}\,\emph{\ref{it:Theta prod cos}}, and $\Theta^\perp_{V,W}$ is always symmetric.
A way to understand this is to note that $\dim V>\dim W^\perp \Leftrightarrow \dim W>\dim V^\perp$.
What is surprising is that \Cref{pr:complementary simple cases}\,\emph{\ref{it:Theta perp 0}}, \emph{\ref{it:Theta perp pi 2}} and \eqref{eq:Theta perp prod sin} depend on the asymmetry of $\Theta_{V,W}$, without which these results  would not even hold for two planes in $\R^3$.

\subsubsection{Oriented Angles}

It is also convenient to define an angle that takes the orientation of subspaces into account.
Let $V$ and $W$ be oriented by blades $A,B\in\bigwedge X$, respectively, with relative orientation $\epsilon_{A,B}$ (w.r.t. given principal bases of $V$ and $W$).

\begin{definition}\label{df:oriented angle}
	The \emph{oriented asymmetric angle}\footnote{In complex spaces, it is a complex-valued angle $\Theta_{A,B}\in\C$. Complex-valued angles between complex vectors have been considered, for example, in \cite{Scharnhorst2001}.}
	$\Theta_{A,B}\in[0,\pi]$ of $V$ with $W$ is given by $\cos\Theta_{A,B} = \epsilon_{A,B} \cos\Theta_{V,W}$.
\end{definition}

We have $\Theta_{A,B} = \Theta_{V,W}$ if $\tilde{A}*B > 0$, $\Theta_{A,B} = \pi - \Theta_{V,W}$ if $\tilde{A}*B < 0$, and if $\tilde{A}*B = 0$ it depends on the principal bases (which will be useful).

We call $\Theta_{A,B}$ an `oriented angle' for short, as the `oriented' refers to the subspaces.
To be clear about our notation:
any $\Theta_{V,W}$, with subspaces as subscripts, is a non-oriented angle, while any $\Theta_{A,B}$, with blades as subscripts, is the oriented angle of the subspaces $[A]$ and $[B]$, oriented by these blades.
Such $\Theta_{A,B}$ should not be confused with the usual angle between $A$ and $B$ in $\bigwedge X$ (though they do coincide when grades are equal).
For the non-oriented angle of $[A]$ and $[B]$ we write $\Theta_{[A],[B]}$.
The same convention will apply to non-oriented and oriented versions of other concepts.

\begin{definition}\label{df:oriented symm complem}
	$\pi_{A,B} = \epsilon_{A,B} \pi_{V,W}$ is an \emph{oriented projection factor}, and \emph{oriented max-symmetrized} and \emph{complementary angles} $\hat{\Theta}_{A,B}, \Theta_{A,B}^\perp \in[0,\pi]$ are given by $\cos\hat{\Theta}_{A,B} = \epsilon_{A,B} \cos\hat{\Theta}_{V,W}$ and $\cos\Theta_{A,B}^\perp = \epsilon_{A,B} \cos\Theta_{V,W}^\perp$.
\end{definition}

Note that in general $\hat{\Theta}_{A,B} \neq \max\{\Theta_{A,B},\Theta_{B,A}\}$, and for distinct grades $\hat{\Theta}_{A,B} = \frac\pi 2$. Also, $\Theta_{A,B}^\perp$ encodes information about the relative orientation of $V$ and $W$, not $V$ and $W^\perp$ (this will be important in \Cref{pr:detailed subproducts}).

\subsubsection{Geometric Algebra Angles}\label{sc:GA angles}

The geometric algebra literature has different definitions for the angle between blades or subspaces, all closely related to our angles.

Hestenes \cite{Hestenes1984clifford}
\SELF{p. 14}
defines an angle between multivectors by $\cos\phi=\frac{\tilde{A}*B}{\|A\|\|B\|}$. He says it has a simple geometric interpretation  for same grade blades, but only describes it for intersecting planes, as a dihedral angle. 
For same grade blades it equals $\Theta_{A,B}$, and for distinct grades it is always $\frac\pi 2$ (even for a line contained in a plane), so that it corresponds to $\hat{\Theta}_{A,B}$.

Dorst et al. \cite{Dorst2007} use Hestenes definition for same grade blades, and if $A$ has a lower grade than $B$ they take the angle with its projection on $B$, which is the same as $\Theta_{[A],[B]}$.
They erroneously see it as a dihedral angle: if, after taking out common factors, there is at most one vector left in each blade, it is the angle between these vectors, otherwise ``no single scalar angle can be defined geometrically, and this geometric nonexistence is reflected in the algebraic answer of $0$ for the scalar product'' \cite[p. 70]{Dorst2007}. This is incorrect: if $(e_1,e_2,e_3,e_4)$ is the canonical basis of $\R^4$, $A=(e_1+e_2)\wedge(e_3+e_4)$ and $B=e_1\wedge e_3$ have no common factors (as $A\wedge B\neq 0$) but $A*B\neq 0$.

Hitzer \cite{Hitzer2010a} defines the angle for subspaces of same dimension as in \eqref{eq:Theta prod cos}, but uses it for different dimensions as well, without the exception for $p>q$, so it corresponds to the min-symmetrized angle $\check{\Theta}_{V,W}$.
He is silent on its geometric interpretation, and recovers Hestenes formula for equal dimensions.

In a survey of the theory \cite{Macdonald2017}, the angle is defined in terms of a ratio of volumes, by $\cos\phi =\frac{\|P_B A\|}{\|A\|}$, which would make it equal to $\Theta_{[A],[B]}$. But it is not clear if the case of $A$ having larger grade is admitted.

\section{Angle Bivector of Subspaces}\label{sc:Angle bivector}

As seen, the asymmetric angle has many useful properties, but it does not fully describe the relative inclination of subspaces (no single scalar angle can do this).
Alternatively, an angle bivector can conveniently store all data about the relative inclination of two subspaces of same dimension. In \Cref{sc:Subspaces of different dimensions} we consider the case of distinct dimensions.

Let $V,W\subset X$ be $p$-subspaces, having associated principal bases $\beta_V=(e_1,\ldots,e_p)$ and $\beta_W=(f_1,\ldots,f_p)$, and  principal angles $\theta_1\leq \cdots\leq\theta_p$.
Also, let $d = \dim (V\cap W)$, $E=e_1e_2\cdots e_p$ and $F=f_1f_2\cdots f_p$.

\begin{definition}
	For $1\leq i\leq p$, $\Span(e_i,f_i)$ is a \emph{principal plane}, with \emph{principal bivector} $\I_i$ (oriented from $V$ to $W$) given by
	\SELF{O caso $i\leq d$ ajuda na fórmula de Hitzer, e em \cref{ex:geom product 2}}
	\SELF{$R_i = e_i f_i = e^{\I_i \theta_i}$}
	\begin{equation*}
		\I_i =\begin{cases}
			\hspace{3pt} 0 \hspace{35pt} \text{if } i\leq d, \\
			\frac{e_i \wedge f_i}{\|e_i \wedge f_i\|} \quad \text{ if } i>d.
		\end{cases}
	\end{equation*}
	For $d< i\leq p$, $e_i^\perp = \I_i f_i$ and $f_i^\perp = e_i\I_i$ are \emph{orthoprincipal vectors}.
	\SELF{$I=e^\perp f=ef^\perp$}
\end{definition}

\begin{figure}
	\centering
	\includegraphics[width=0.7\linewidth]{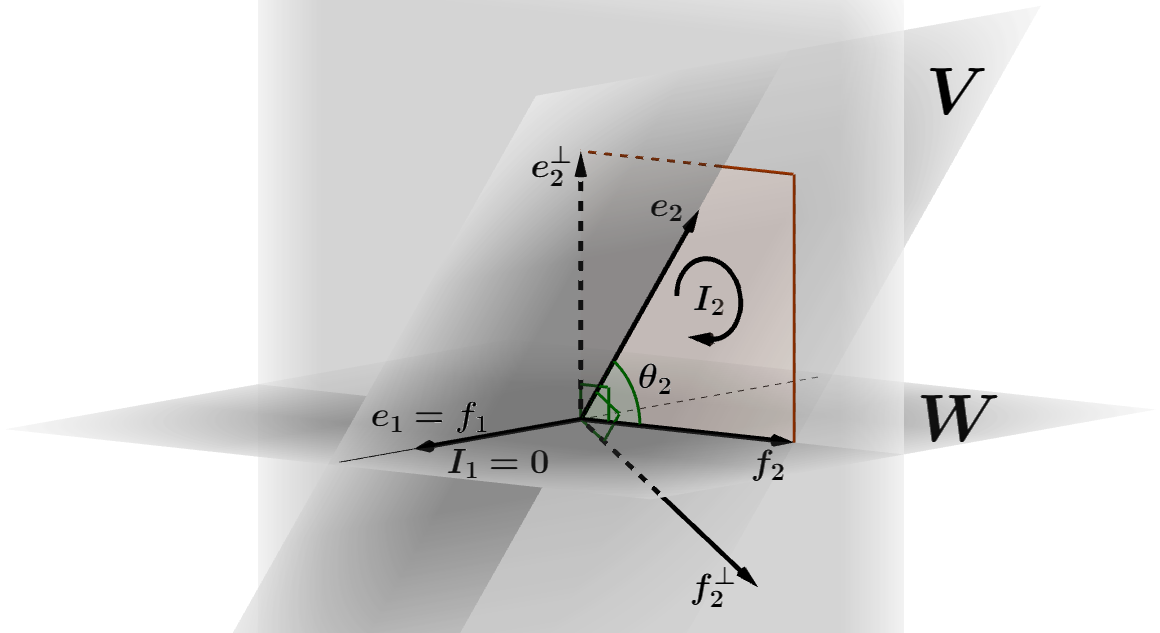}
	\caption{Principal vectors, bivectors and planes, and orthoprincipal vectors, for planes $V,W\subset \R^3$.}
	\label{fig:principals}
\end{figure}

The term `plane' is used broadly, as it degenerates to a line if $i\leq d$.
For $d< i\leq p$, $e_i^\perp$ is the normalized component of $e_i$ orthogonal to $W$, and likewise for $f_i^\perp$ (see Fig. \ref{fig:principals}).
Note that $\I_i = e_i^\perp f_i = e_i f_i^\perp$ anti-commutes with $e_i$ and $f_i$, and for $j\neq i$ it commutes with $e_j$, $f_j$ and $\I_j$, since principal planes are mutually orthogonal.

\begin{definition}\label{df:angle bivector}
	$\btheta_{V,W}= \sum_{i=1}^p \theta_i\I_i$ is an \emph{angle bivector} from $V$ to $W$.
\end{definition}

Note that $\tilde{\btheta}_{V,W} = - \btheta_{V,W}$ is an angle bivector from $W$ to $V$.
Also, $\btheta_{V,W}$ may depend on the choice of principal bases.
In \Cref{sc:Geodesics in Grassmannians} we interpret this non-uniqueness in terms of the geometry of the Grassmannians.

\begin{proposition}\label{pr:uniqueness thetaVW}
	$\btheta_{V,W}$ is uniquely defined $\,\Leftrightarrow\, \theta_p\neq\frac\pi2$.
\end{proposition}
\begin{proof}
	Switching signs of both $e_i$ and $f_i$ does not affect $I_i$.
	If $\theta_p = \frac\pi2$ we can keep $e_p$ and replace $f_p$ with $-f_p$, so now $I_p$ switches sign.
	If $\theta_p\neq\frac\pi2$ but some $\theta_i$'s are repeated, the choice of principal bases can affect their $I_i$'s, but does not change $\btheta_{V,W}$.
	To prove it, let $\theta_i=\theta\neq \frac\pi 2$ for all $i$, for simplicity. 
	An orthogonal change of principal basis $e_i' = \sum_j c_{ij}e_j$ in $V$
	\SELF{$(c_{ij})$ orthogonal, so $\sum_i c_{ij}c_{ik} = \delta_{jk}$}
	must 
	\SELF{$\theta\neq \frac\pi 2$ forces it, as $f_i' = P_w e_i'/\|P_w e_i'\|$}
	be accompanied by a corresponding change $f_i' = \sum_j c_{ij}f_j$ in $W$, so the bases remain associated ($e_i'\cdot f_j' = \delta_{ij} \cos\theta$). A calculation shows $\sum_i \theta I_i' = \sum_i \theta I_i$ for the new $I_i' = \sum_{j,k} c_{ij} c_{ik} \frac{e_j \wedge f_k}{\sin \theta}$. 
\end{proof}

With more than one $\theta_i=\frac\pi2$, we can obtain a continuous family of $\btheta_{V,W}$'s via independent orthogonal transformations of $V\cap W^\perp$ and $W\cap V^\perp$.

In \Cref{sc:Exponentials of Angle Bivectors} we analyze the exponentials of angle bivectors in detail. For now, we show that they give rotors connecting the subspaces.

\begin{proposition}\label{pr:theta rotates V to W}
	$F = e^{-\frac \btheta 2} E e^{\frac \btheta 2} = E e^{\btheta} = e^{-\btheta} E$, where $\btheta = \btheta_{V,W}$.
	\SELF{Convenção de Hestenes1984clifford,Hitzer2010a. Outros usam $R=e^{-I\theta}$, que atua como $RxR^{-1}$, mas causa mais sinais}
\end{proposition}
\begin{proof}
	Commutativity and $f_i = e^{-\I_i \theta_i/2} \,e_i\, e^{\I_i \theta_i/2}$ give the first equality.
	\SELF{$e^{\frac 12 \btheta_{V,W}} = e^{\I_1 \theta_1/2}\cdots e^{\I_p \theta_p/2}$}
	And since $e_i$ anti-commutes with $\I_i$ we have $e^{-\I_i \theta_i/2} \,e_i = e_i\,e^{\I_i \theta_i/2}$.
\end{proof}

The operation $E \mapsto e^{-\frac 12 \btheta_{V,W}} E e^{\frac 12 \btheta_{V,W}}$ takes $V = [E]$ onto $W = [F]$ via independent rotations, by principal angles along principal planes, each produced by a plane rotor $e^{\I_i \theta_i/2}$.
The simpler process $E\mapsto E e^{\btheta_{V,W}}$ also takes $V$ to $W$, but while  $e^{-\frac12\btheta_{V,W}} e_i e^{\frac12\btheta_{V,W}} = f_i$, in general $e_i e^{\btheta_{V,W}} \neq f_i$.
\SELF{$e_i e^{\btheta_{V,W}} = e_i e^{\theta_i I_i} e^{\ldots} = f_i e^{\ldots}$} 
So, while the first operation takes each vector of $V$ to another in $W$, the second one relates the whole subspaces, without mapping individual vectors.

Since $E$ and $F$ are unit $p$-blades in $V$ and $W$, \Cref{pr:theta rotates V to W} implies $e^{\btheta_{V,W}}$ (but not $e^{\frac12\btheta_{V,W}}$) is uniquely defined up to a sign,
	\SELF{due to the distinct orientations of $W$ to which $V$ can be taken}
even when $\btheta_{V,W}$ is not unique.

\begin{example}\label{ex:angle bivector}
	Let $\{e_1,e_2,e_3,f_2,f_3\}$ be an orthonormal set, $E=e_1e_2e_3$, $F=e_1f_2f_3$, $V=[E]$ and $W=[F]$. 
	With the associated principal bases $\beta_V=(e_1,e_2,e_3)$ and $\beta_W=(e_1,f_2,f_3)$ we obtain $\btheta_{V,W} = \frac\pi2 (e_2f_2 + e_3f_3)$, 
	$e^{\btheta_{V,W}} = e_2f_2e_3f_3$ and
	$e^{\frac12\btheta_{V,W}} = (1+e_2f_2+e_3f_3+e_2f_2e_3f_3)/2$.
	Calculations confirm \Cref{pr:theta rotates V to W} and show that $e^{-\frac12\btheta_{V,W}} e_2 e^{\frac12\btheta_{V,W}} = f_2$ but $e_2 e^{\btheta_{V,W}} = f_2 e_3 f_3 \neq f_2$.
	Note that the angle bivector ignores $e_1\in V\cap W$.
	
	If $f_2'=(f_2+f_3)/\sqrt{2}$ and $f_3'=(f_2-f_3)/\sqrt{2}$ then $\beta_W' = (e_1,f_2',f_3')$ is another principal basis of $W$ associated with $\beta_V$.  
	With it, we now find $\btheta_{V,W}' = \frac\pi2 (e_2f_2' + e_3f_3') \neq \btheta_{V,W}$,
	$e^{\btheta_{V,W}'} = e_2f_2'e_3f_3' = -e^{\btheta_{V,W}}$ and
	$e^{\frac12\btheta_{V,W}'} = (1+e_2f_2'+e_3f_3'+e_2f_2'e_3f_3')/2 \neq \pm e^{\frac12\btheta_{V,W}}$.
	Also, $e^{-\frac12\btheta_{V,W}'} e_2 e^{\frac12\btheta_{V,W}'} = f_2'$ and likewise for $e_3$, so $E$ is rotated to $F'=e_1f_2'f_3'=-F$.
\end{example}

\subsection{Oriented Angle Bivector}

Let $V$ and $W$ be oriented by nonzero blades $A,B\in\bigwedge^p X$, respectively, with relative orientation $\epsilon_{A,B}$ (w.r.t. $\beta_V$ and $\beta_W$).

\begin{definition}\label{df:oriented angle bivector}
	The \emph{oriented principal angles and bivectors} of $V$ and $W$ are $\theta_i^+=\theta_i$ and $I_i^+=I_i$ for $i<p$ and also for $i=p$ if $\epsilon_{A,B}=1$, otherwise  $\theta_p^+ = \pi-\theta_p$ and $\I_p^+ = -\I_p$.
	The \emph{oriented angle bivector} from $V$ to $W$ is 
		\SELF{equivalent to $\btheta_{A,B} = \btheta_{V,W} + (\epsilon_{A,B}-1)\frac\pi 2 I_p$, or subtracting $\pi$ in $\theta_p$}
	\begin{equation}\label{eq:oriented angle bivector}
		\btheta_{A,B} = \sum_{i=1}^p \theta_i^+\I_i^+ = \begin{cases}
			\btheta_{V,W} \hspace{31pt} \text{ if } \epsilon_{A,B}=1, \\
			\btheta_{V,W} -\pi I_p \ \text{ if } \epsilon_{A,B}=-1.
		\end{cases}
	\end{equation}	
\end{definition}

Again, we use $\btheta_{V,W}$ for the non-oriented angle, $\btheta_{A,B}$ for the oriented one.
Note that $0\leq \theta_1^+\leq\cdots\leq\theta_{p-1}^+ \leq \frac\pi2$ and $\theta_{p-1}^+ \leq \theta_p^+ \leq \pi - \theta_{p-1}^+$.

\begin{proposition}\label{pr:uniqueness thetaAB}
	$\btheta_{A,B}$ is uniquely defined $\,\Leftrightarrow\, \theta_{p-1}^+ + \theta_p^+ \neq \pi$.
\end{proposition}
\begin{proof}
	$\theta_{p-1}^+ + \theta_p^+ = \pi \Leftrightarrow \theta_{p-1} = \theta_p = \frac\pi 2$, or $\theta_{p-1} = \theta_p \neq \frac\pi2$ and $\epsilon_{A,B} = -1$.
	Swapping $f_{p-1}$ and $f_p$ in the first case, 
	\SELF{$e^\perp_{p-1}f_{p-1} \pm e^\perp_{p}f_{p}$ becomes $e^\perp_{p-1}f_{p} \mp e^\perp_{p}f_{p-1}$ ($\epsilon_{A,B}$ changes)}
	or $(e_{p-1},f_{p-1})$ and $(e_p,f_p)$ in the second one, changes $\btheta_{A,B}$. 
	\SELF{$\btheta_{V,W}$ does not change, and $\btheta_{A,B} = \btheta_{V,W} -\pi I_p$ turns into $\btheta_{V,W} -\pi I_{p-1}$}
	
	If $\theta_{p-1} \neq \theta_p = \frac\pi 2$, replacing $f_p$ with $-f_p$ as in \Cref{pr:uniqueness thetaVW} does not change $I_p^+ = \epsilon_{A,B} I_p$, as both $I_p$ and $\epsilon_{A,B}$ switch signs.
	If $\theta_p \neq \frac\pi 2$ then $\btheta_{V,W}$ is uniquely defined, and so is $I_p$ if $\theta_{p-1} \neq \theta_p$ as well. 
\end{proof}

\begin{proposition}\label{pr:exp theta+ exp theta}
	$e^{\btheta_{A,B}} = \epsilon_{A,B} e^{\btheta_{V,W}}$.
\end{proposition}
\begin{proof}
	Follows from \eqref{eq:oriented angle bivector}.
\end{proof}

\begin{proposition}\label{pr:oriented theta rotates A to B}
	If $\|A\|=\|B\|=1$ then $B = e^{-\frac\btheta 2} A e^{\frac \btheta 2} = A e^{\btheta} = e^{-\btheta} A$, where $\btheta = \btheta_{A,B}$.
\end{proposition}
\begin{proof}
	By \eqref{eq:principal decomposition}, $A = \epsilon_A E$ and $B = \epsilon_B F$, so the result follows from \Cref{pr:theta rotates V to W,pr:exp theta+ exp theta}, and also $e^{\frac12\btheta_{A,B}} = -I_p e^{\frac12\btheta_{V,W}}$ if $\epsilon_{A,B} = -1$.
		\SELF{In this case, $e^{-\frac{\btheta}{2}} A e^{\frac{\btheta}{2}} = -\epsilon_A I_p e^{-\frac{1}{2}\btheta_{V,W}} E I_p e^{\frac{1}{2}\btheta_{V,W}} = \epsilon_B I_p F I_p = -\epsilon_B F I_p^2 = B$}
\end{proof}

Now $V$ is rotated onto $W$ matching orientations, and $e^{\btheta_{A,B}}$ (but not $e^{\frac12\btheta_{A,B}}$) is uniquely determined by $A$ and $B$.
\SELF{This can be understood noting that an $e_i$ with $\theta_i=\frac\pi2$ can in principle be rotated to any unit vector $f_i \in W\cap V^\perp$, but the rotation of $e_p$ is determined by the previous choices and the orientation to be matched.}
In \Cref{sc:Subspaces of different dimensions} we show this is not valid in case of different dimensions.

\begin{example}\label{ex:oriented angle bivector}
	In \Cref{ex:angle bivector}, let $V$ and $W$ be oriented by $E$ and $F$. 
	Using $\beta_V$ and $\beta_W$ we find $\epsilon_{E,F} =1$ and $\btheta_{E,F}=\btheta_{V,W}$.
	As $F'=-F$, with $\beta_W'$ we have $\epsilon_{E,F}'=-1$ and $\btheta_{E,F}' = \frac\pi2 (e_2f_2'-e_3f_3')$, which does not equal $\btheta_{V,W}$ nor $\btheta_{V,W}'$. 
	Still, $e^{\btheta_{E,F}'} = -e_2f_2'e_3f_3' = e^{\btheta_{E,F}}$, so that, either way, $V$ is taken to $W$ with the correct orientation.
	However, $e^{\frac12 \btheta_{E,F}'} \neq e^{\frac12 \btheta_{E,F}}$, since vector-wise we have distinct rotations: $(e_1,e_2,e_3)$ is taken by the first rotor to $(e_1,f_2',-f_3')$, and by the second one to $(e_1,f_2,f_3)$.
\end{example}

\subsection{Minimal Geodesics in the Grassmannians}\label{sc:Geodesics in Grassmannians}

Let $G_p(X)$ (resp. $G_p^+(X)$) be the Grassmannian of non-oriented (resp. oriented) $p$-subspaces of $X$, identified with its Plücker embedding in the projective space $\PP(\bigwedge^p X)$ (resp. unit sphere $S(\bigwedge^p X)$).
We relate $\btheta_{V,W}$ and $\btheta_{A,B}$ to the geometry of these manifolds, using results from \cite{Kozlov2000}.

The curve given by $F(t) = e^{-\frac{t}{2}\btheta_{V,W}} E e^{\frac{t}{2}\btheta_{V,W}} = f_1(t)\wedge\cdots\wedge f_p(t)$, where $t\in[0,1]$ and $f_i(t) = \cos(t\theta_i) e_i + \sin(t\theta_i) f_i^\perp$, is a minimal geodesic in $G_p(X)$ connecting $V$ to $W$, and $\|\btheta_{V,W}\| = \left(\sum_{i=1}^p \theta_i^2\right)^{\frac12}$ gives the arc-length distance between them.
Note that this is the distance along geodesics inside $G_p(X)$, while the Fubini-Study distance given by $\Theta_{V,W}$ measures geodesics in the ambient space $\PP(\bigwedge^p X)$.

A minimal geodesic in $G_p^+(X)$ connecting unit $p$-blades $A$ and $B$ is given by $B(t) = e^{-\frac{t}{2}\btheta_{A,B}} A e^{\frac{t}{2}\btheta_{A,B}} =  f_1(t)\wedge\cdots\wedge f_p(t)$, with $t\in[0,1]$, $f_i(t)$ as before for $i<p$ and $f_p(t) = \cos(t\theta_p) e_p + \epsilon _{A,B} \sin(t\theta_p) f_p^\perp$.
Its length $\|\btheta_{A,B}\|$ equals $\|\btheta_{V,W}\|$ if $\epsilon_{A,B}=1$, otherwise $\|\btheta_{A,B}\|^2 = \|\btheta_{V,W}\|^2 + \pi(\pi-2\theta_p)$.

The minimal geodesic is unique unless $\theta_p=\frac\pi2$ in the non-oriented case,
\SELF{i.e. if $\theta_p<\frac\pi2$ the minimal geodesic is unique}
or $\theta_{p-1}^+ + \theta_p^+ = \frac\pi2$ in the oriented one, precisely the cases in which $\btheta_{V,W}$ or $\btheta_{A,B}$ depend on the choice of principal bases. 
In fact, we have:

\begin{proposition}
	There is a one-to-one correspondence between angle bivectors $\btheta_{V,W}$ (resp. $\btheta_{A,B}$) and minimal geodesics connecting the subspaces in $G_p(X)$ (resp. $G_p^+(X)$).
\end{proposition} 
\begin{proof}
	Any $\btheta_{V,W}$ determines a minimal geodesic in $G_p(X)$ given by $F(t)$ as above. 
	An angle bivector of $V$ with its middle point $U = F(\frac12)$ is given by $\btheta_{V,U} = \btheta_{V,W}/2$, and the principal angles of $V$ and $U$ are at most $\frac\pi 4$. By \Cref{pr:uniqueness thetaVW}, $\btheta_{V,U}$ is uniquely defined, so if $\btheta_{V,W}'$ gives the same geodesic then $\btheta_{V,W}' = 2\btheta_{V,U} = \btheta_{V,W}$.
	
	And given a minimal geodesic $\gamma$ from $V$ to $W$,  the minimal geodesic from $V$ to its middle point $U$ is unique.
	So, given an angle bivector $\btheta_{V,U}$, the geodesic it determines must coincide with the first half of $\gamma$. 
	Thus $\btheta_{V,W} = 2\btheta_{V,U}$ is an angle bivector determining $\gamma$.
	
	The proof for the oriented case is similar, using \Cref{pr:uniqueness thetaAB}.
\end{proof}

\subsection{Exponentials of Angle Bivectors}\label{sc:Exponentials of Angle Bivectors}

Exponentials of angle bivectors decompose into rotors of principal planes, or in terms of principal angles, asymmetric angles, or projection factors, as follows.
In \Cref{sc:Principal angles via geometric algebra} we show how to obtain these decompositions explicitly.

\begin{definition}
	For $1\leq i\leq p$, $R_i = e_i f_i = e^{I_i\theta_i} = \cos\theta_i + I_i \sin\theta_i$ is a \emph{principal rotor}.
\end{definition}

\begin{proposition}\label{pr:exp product rotors}
	$e^{\btheta_{V,W}} = R_1 R_2\cdots R_p$.
\end{proposition}
\begin{proof}
	As the $I_i$'s commute, $e^{\btheta_{V,W}} = \prod_{i=1}^p e^{I_i\theta_i}$.
\end{proof}


\begin{proposition}\label{pr:exp expansion theta_i}
	Let $c_i=\cos\theta_i$ and $s_i=\sin\theta_i$. Then
	\begin{align*} 
		e^{\btheta_{V,W}} &= c_1c_2\cdots c_p \\
		&\hspace{4pt} + s_1c_2\cdots c_p \I_1 + c_1s_2c_3\cdots c_p \I_2 + \cdots +c_1\cdots c_{p-1} s_p \I_p \nonumber\\
		&\hspace{4pt} +s_1s_2c_3\cdots c_p \I_1\I_2 + s_1c_2s_3\cdots c_p \I_1\I_3 + \cdots + c_1\cdots s_{p-1}s_p \I_{p-1}\I_p  \nonumber\\
		&\hspace{8pt} \vdots \nonumber\\
		&\hspace{4pt} + c_1s_2\cdots s_p \I_2\cdots I_p + \cdots + s_1\cdots s_{p-1} c_p \I_1\cdots I_{p-1} \nonumber\\
		&\hspace{4pt} + s_1s_2\cdots s_p \I_1\I_2\cdots\I_p. \nonumber
	\end{align*}
\end{proposition}
\begin{proof}
	$e^{\btheta_{V,W}} = \prod_{i=1}^p R_i = \prod_{i=1}^p (c_i+s_i\I_i)$.
\end{proof}

This expression appears in Hitzer's geometric product formula \cite{Hitzer2010a}, and this is understandable since \Cref{pr:theta rotates V to W} gives $e^{\btheta_{V,W}} = \tilde{E}F$. 
We can simplify it using the asymmetric angles and some multi-index notation.

\begin{definition}
	Given $a,b,k\in\N$ with $a\leq b$ and $1\leq k\leq b-a+1$, let $\II_0^{a,b} =\{0\}$ and 
	$\II_k^{a,b} = \{ (i_1,\ldots,i_k)\in\N^k : a\leq i_1 < \cdots<i_k\leq b \}$.
	Also, let $\II^{a,b}= \cup_{k=0}^{b-a+1}\, \II_k^{a,b}$. 
	When $a=1$ we omit it and write $\II_k^b$ and $\II^b$.
	
\end{definition}


\begin{definition}
	Let $\I_0=1$ and $\I_\ii = \I_{i_1}\cdots\I_{i_k}$ for $\ii=(i_1,\ldots,i_k)\in\II^{d+1,p}$, where $d=\dim (V\cap W)$. 
	\SELF{$1\leq k\leq p-d$}
	For any $\ii\in\II^{d+1,p}$, let $F_\ii = \I_\ii F$ and $W_\ii = [F_\ii]$, where $F = f_1f_2\cdots f_p$ as before. 
\end{definition}

\begin{lemma}\label{pr:Ii Wi}
	For any $\ii\in\II^{d+1,p}$:
	\begin{enumerate}[i)]
		\item $F_\ii$ is the unit $p$-blade obtained from $F$ by replacing  $f_i$ with $e_i^\perp$ for each $i\in\ii$.\label{it:Fi}
		\item $\beta_\ii = \{f_i:i\not\in\ii\} \cup \{e_i^\perp:i\in\ii\}$ is a principal basis of $W_\ii$ associated to $\beta_V$. \label{it:beta i}
		\item The (unordered) principal angles of $V$ and $W_\ii$ are $\theta_i$ for $i\not\in\ii$ and $\frac\pi 2 - \theta_i$ for $i\in\ii$. \label{it:theta Wi}
	\end{enumerate}
\end{lemma}
\begin{proof}
	\emph{(\ref{it:Fi})} As $\I_i$ and $f_j$ commute if $i\neq j$, and $I_if_i = e_i^\perp$, we have $\I_\ii F = \I_{i_1}\cdots\I_{i_k} f_1\cdots f_p = f_1\cdots \I_{i_1} f_{i_1} \cdots \I_{i_k} f_{i_k} \cdots f_p = f_1\cdots e_{i_1}^\perp \cdots e_{i_k}^\perp \cdots f_p$. 
	\emph{(\ref{it:beta i}, \ref{it:theta Wi})} By the previous item, $\beta_\ii$ is a basis of $[F_\ii]$, obtained from $\beta_W$ by replacing, for $i\in\ii$, $f_i$ with $e_i^\perp$, which is in the same principal plane and makes with $e_i$ an angle $\frac\pi 2 - \theta_i$. Thus condition \eqref{eq: ei fj}, with the appropriate substitutions, is satisfied for $\beta_V$ and $\beta_\ii$.
\end{proof}


\begin{theorem}\label{pr:exp Theta}
	$\displaystyle e^{\btheta_{V,W}} = \sum_{\ii\in\II^{d+1,p}} \cos\Theta_{V,W_\ii} \,\I_\ii = \sum_{\ii\in\II^{d+1,p}} \pi_{V,W_\ii} \,\I_\ii$.
\end{theorem}
\begin{proof}
	Follows from \Cref{pr:exp expansion theta_i}, since $\cos\Theta_{V,W_\ii} = \prod_{i\notin \ii} \cos\theta_i  \prod_{i\in \ii} \sin\theta_i$ by \Cref{pr:properties Grassmann}\,\emph{\ref{it:Theta prod cos}} and \Cref{pr:Ii Wi}\emph{\ref{it:theta Wi}}.
\end{proof}

With $e^{\btheta_{V,W}}$ decomposed like this, each component shows how volumes in $V$ contract when orthogonally projected on a $W_\ii$.
In particular:

\begin{proposition}\label{pr:0 2p components exp}
	$\inner{e^{\btheta_{V,W}}}_0 = \cos\Theta_{V,W} = \pi_{V,W}$, and also $\|\inner{e^{\btheta_{V,W}}}_{2p}\| = \cos\Theta^\perp_{V,W} = \pi_{V,W^\perp}$.
\end{proposition}
\begin{proof}
	In \Cref{pr:exp Theta}, $\inner{e^{\btheta_{V,W}}}_0$ is given by $\ii=0$, and $W_0=W$.
	If $d\neq 0$ then $\inner{e^{\btheta_{V,W}}}_{2p}=0$ and $\Theta^\perp_{V,W} = \frac\pi2$ by \Cref{pr:complementary simple cases}\emph{\ref{it:Theta perp pi 2}}.
	If $d=0$, $P_{W^\perp}(V) = W_\ii$ for $\ii=(1,\ldots,p)$, and $\Theta_{V,W_\ii} = \Theta_{V,W^\perp}$ by \Cref{pr:properties Grassmann}\emph{\ref{it:Theta PWV}}.
\end{proof}

The theorem can be adapted for oriented angles, once we orient $W_\ii$.

\begin{definition}
	For $\ii\in\II^{d+1,p}$, we give $W_\ii$ the orientation of $B_\ii = I_\ii B = \epsilon_B  \|B\| F_\ii$, and set $\epsilon_{A,B_\ii}$ in terms of $\beta_V$ and $\beta_\ii$, 
		\SELF{ordered accordingly}
\end{definition}

\begin{proposition}\label{pr:oriented exp theta}
	$\displaystyle e^{\btheta_{A,B}} = \sum_{\ii\in\II^{d+1,p}} \cos\Theta_{A,B_\ii} \,\I_\ii = \sum_{\ii\in\II^{d+1,p}} \pi_{A,B_\ii} \,\I_\ii$.
\end{proposition}
\begin{proof}
	Follows from \Cref{pr:exp theta+ exp theta} and \Cref{pr:exp Theta}, as $\epsilon_{A,B_\ii} = \epsilon_{A,B}$.
\end{proof}

\subsection{Plücker Coordinates}\label{sc:Plücker coordinates}

It will be interesting to rewrite \Cref{pr:exp Theta} in a different form. This will require some more notation.

\begin{definition}
	Extend $\beta_W$ to the orthonormal basis of $Y=V+W$ given by $\beta_Y = (f_1,\ldots,f_d,e_{d+1}^\perp,f_{d+1},\ldots,e_p^\perp,f_p) = (y_1,\ldots,y_{2p-d})$.
	Its \emph{coordinate blades} are $C_0=1$ and $C_\jj = y_{ j_1} y_{j_2} \cdots y_{ j_k}$ for $\jj=(j_1,\ldots,j_k)\in\II^{2p-d}$, forming orthonormal bases $\beta_{\bigwedge^k Y} = ( C_\jj)_{\jj\in\II^{2p-d}_k}$ and $\beta_{\bigwedge Y} = ( C_\jj)_{\jj\in\II^{2p-d}}$ of $\bigwedge^k Y$ and $\bigwedge Y$.
	Each $Y_\jj = [C_\jj]$ with $\jj\in\II^{2p-d}_k$ is a \emph{coordinate $k$-subspace}.
\end{definition}

Coordinates of a blade $A\in\bigwedge^k Y$ in $\beta_{\bigwedge^k Y}$ give homogeneous \emph{Plücker coordinates} of $[A]$ w.r.t.\! $\beta_Y$.

\begin{definition}
	Let $\sigma : \bigwedge Y \rightarrow \bigwedge Y$ be given by $\sigma(C) = C F^{-1} = C f_p \cdots f_1$ for any $C\in\bigwedge Y$.
\end{definition}

This map produces a permutation of $\beta_{\bigwedge Y}$. 
Also, $\I_\ii = e_{i_1}^\perp f_{i_1} \cdots e_{i_k}^\perp f_{i_k}$ and $F_\ii$ are elements of $\beta_{\bigwedge Y}$, with $\I_\ii = \sigma(F_\ii)$.

Among all coordinate $p$-subspaces of $\beta_Y$, the $W_\ii$'s are those having either $f_i$ or $e_i^\perp$ for each $i$, 
\SELF{They intersect each principal plane in a line. Não pode ter ambos sem faltar em outro, pois há $p$ planos}
and the projection factor of $V$ on any other vanishes.
So we can extend the sum in \Cref{pr:exp Theta} as follows.

\begin{proposition}\label{pr:geom prod coord subspaces}
	$\displaystyle e^{\btheta_{V,W}} = \!\sum_{\jj\in\II_p^{2p-d}} \cos\Theta_{V,Y_\jj} \, \sigma(C_\jj) = \!\sum_{\jj\in\II_p^{2p-d}} \pi_{V,Y_\jj} \, \sigma(C_\jj)$.
\end{proposition}
\begin{proof}
	Given $\ii \in\II^{d+1,p}$ we have $F_\ii = C_\jj$ for some $\jj\in\II_p^{2p-d}$, and so $W_\ii = Y_\jj$ and $\I_\ii = \sigma(C_\jj)$.
	And given $\jj\in\II_p^{2p-d}$, if $C_\jj$ has either $f_i$ or $e_i^\perp$
	\SELF{not both, as there are $p$ pairs}
	for each $1\leq i\leq p$ then forming $\ii\in\II^{d+1,p}$ with the indices for which it has $e_i^\perp$ we obtain $C_\jj = F_\ii$.
	Otherwise $e_i \perp Y_\jj$ for some $i$ and $\pi_{V,Y_\jj} = 0$.
	So the nonzero terms in the sums above are the same as in \Cref{pr:exp Theta}.
\end{proof}

By \Cref{pr:theta rotates V to W}, $E = \sigma^{-1}(e^{\btheta_{V,W}}) = \sum_{\jj\in\II_p^{2p-d}} \cos\Theta_{V,Y_\jj} \, C_\jj$, so the coefficients in \Cref{pr:geom prod coord subspaces} are Plücker coordinates of $V$ w.r.t.\! $\beta_Y$. 
They are normalized, with
\begin{equation}\label{eq:Pythagorean identity}
	\sum_{\jj\in\II_p^{2p-d}} \cos^2\Theta_{V,Y_\jj} = 1,
\end{equation}
but also scrambled by $\sigma$.
To relate each coefficient in $e^{\btheta_{V,W}}$ to the correct coordinate blade or subspace, note that the only ones that do not vanish are those from \Cref{pr:exp Theta}, and $\I_\ii$ corresponds to $F_\ii$ via $\sigma^{-1}$.

Though useful for theoretical purposes, Plücker coordinates are computationally expensive if $n=\dim X$ is large \cite[p.197]{Stolfi1991}. 
It takes $\binom{n}{p}$ coordinates to represent $p$-subspaces, but Plücker relations reduce the dimension of their Grassmannian to $p(n-p)$. When $p$ is not close to $1$ or $n$, blades become sparse among multivectors, and this representation becomes quite inefficient. 
There are more economical, even if less elegant, ways to locate a subspace (e.g., reduced simplex representations can use as low as $p(n-p)+1$ coordinates  \cite[p.204]{Stolfi1991}).
The angle bivector strikes a nice balance between economy and theoretical convenience.

\subsection{Distinct Dimensions and Projective-Orthogonal Decomposition}\label{sc:Subspaces of different dimensions}

Now we treat the case of different dimensions.
Let $A\in\bigwedge^p X$ and $B\in\bigwedge^q X$ be nonzero blades, $m=\min\{p,q\}$, $\beta_V$ and $\beta_W = (f_1,\ldots,f_q)$ be associated principal bases of $V=[A]$ and $W=[B]$, and $\epsilon_ B$ be as in \eqref{eq:principal decomposition}.

\begin{definition}
	A \emph{projective-orthogonal (PO) decomposition} of $B$ w.r.t.\! $A$ is $B = B_P B_\perp$, where $B_P = \epsilon_B \|B\| f_1f_2\cdots f_m$ and $B_\perp = f_{m+1}f_{m+2}\cdots f_q$ ($=1$ if $p\geq q$) are, respectively, \emph{projective and orthogonal subblades}.
	We also decompose $W = W_P\oplus W_\perp$, where $W_P=[B_P]$ and $W_\perp=[B_\perp]$ are, respectively, \emph{projective and orthogonal subspaces} of $W$ w.r.t.\! $V$.
\end{definition}



Note that $B_\perp$ is completely orthogonal to $A$ and $B_P$, so $B = B_P\wedge B_\perp$. 
Also, $\beta_{W_P} = (f_1,\ldots,f_m)$ is a principal basis of $W_P$ associated to $\beta_V$, for which $\epsilon_{A,B_P} = \epsilon_{A,B}$, and the principal angles of $V$ and $W_P$ are the same as those of $V$ and $W$.
If $p\leq q$ then  $\grade(B_P) = \grade(A)$.

If $p\geq q$ then $B_P=B$, $B_\perp=1$, $W_P=W$ and $W_\perp=\{0\}$. 
If $p<q$ and $V\not\pperp W$, \eqref{eq:ProjVW} gives $W_P=P_W(V)$ and $W_\perp = W\cap V^\perp$, and the blades are unique up to signs, with $B_P$ having the orientation of $\epsilon_{A,B}\P_BA$.
	\SELF{$B_P=\epsilon_{A,B}\|B\|\frac{\P_{B} A}{\|\P_{B} A\|}$ and $B_\perp = B_P^{-1}B$} 
If $p<q$ and $V\pperp W$, both decompositions depend on the choice of $\beta_W$, with $W_P \supsetneq P_W(V)$ and $W_\perp \subsetneq W\cap V^\perp$.

\begin{proposition}\label{pr:Thetas different dim}
	 $\Theta_{V,W}=\Theta_{V,W_P}$, $\Theta_{V,W}^\perp = \Theta_{V,W_P}^\perp$, $\Theta_{A,B} = \Theta_{A,B_P}$ and $\Theta_{A,B}^\perp = \Theta_{A,B_P}^\perp$ (for oriented angles w.r.t. $\beta_V$, $\beta_W$ and $\beta_{W_P}$).
\end{proposition}
\begin{proof}
	Follows from \eqref{eq:Theta prod cos} and \eqref{eq:Theta perp prod sin}.
\end{proof}

We extend \Cref{df:angle bivector,df:oriented angle bivector} to the case of distinct dimensions.

\begin{definition}
	$\btheta_{V,W} = \btheta_{V,W_P}$ and $\btheta_{A,B} = \btheta_{A,B_P}$ (w.r.t. $\beta_V$, $\beta_W$ and $\beta_{W_P}$).
\end{definition}

When $p<q$, the non-uniqueness of the decompositions can increase the ambiguity of the angle bivectors.
Not even $e^{\btheta_{A,B}}$ is uniquely defined anymore, as we can switch the orientation of $B_P$, and if $V\pperp W$ we can swap $f_p$ with any unit vector in $W_\perp$.

Still, our results readily adapt.
For example, if $p\leq q$ and $\|A\|=\|B\|=1$ then  $e^{-\frac 12 \btheta_{A,B}} A e^{\frac 12 \btheta_{A,B}} = B_P$, and so $V$ rotates onto $W_P$.
And as $B_\perp$ is completely orthogonal to all principal bivectors in $\btheta_{B,A} = - \btheta_{A,B}$, we have $e^{-\frac 12 \btheta_{B,A}} B_P B_\perp e^{\frac 12 \btheta_{B,A}} = A B_\perp$, so that $W$ rotates to $V\oplus W_\perp$.

\section{Clifford Geometric Product}\label{sc:geometric product}

We relate the geometric product of blades to the angle bivector, and  interpret geometrically some of its well known algebraic properties.
We consider first equal grades, leaving the general case for \Cref{sc:Blades of different grades}, and for completeness we define $\btheta_{A,B}=0$ if $A$ or $B$ is $0$.


\begin{theorem}\label{pr:geom prod angle bivector}
	$\tilde{A}B = \|A\|\|B\| \,e^{\btheta_{A,B}}$ for same grade blades $A,B\in\bigwedge^p X$.
\end{theorem}
\begin{proof}
	For $A,B\neq 0$, \Cref{pr:oriented theta rotates A to B} gives $\frac{B}{\|B\|} = \frac{A}{\|A\|} e^{\btheta_{A,B}}$.
\end{proof}

Note that $\tilde{A}B = \epsilon_{A,B} \|A\|\|B\| \,e^{\btheta_{[A],[B]}}$ carries the relative orientation of $A$ and $B$, which makes sense as $\epsilon_{A,B}$ is the sign of $\tilde{A}*B = \inner{\tilde{A}B}_0$ (if $A*B\neq 0$).
Still, this makes relating their orientations in $AB = \epsilon_{\tilde{A},B} \|A\|\|B\| \,e^{\btheta_{[A],[B]}}$  less immediate.
In \Cref{sc:Products with reversions} we discuss how this affects other products.

With \Cref{pr:oriented exp theta} we obtain
\begin{equation}\label{eq:AB projections}
	\tilde{A}B = \|A\|\|B\|  \sum_{\ii\in\II^{d+1,p}} \pi_{A,B_\ii} \,\I_\ii 
	= \epsilon_{A,B} \sum_{\ii\in\II^{d+1,p}} \|P_{B_\ii}A\|\|B\|  \,\I_\ii.
\end{equation}
The projections from $A$ to the $B_\ii$'s make $A$ and $B$ seem to play very different roles in the product.
But $A_\ii = A\I_\ii$ satisfies $\pi_{A,B_\ii}=\pi_{B,A_\ii}$, as $e_i \cdot e_i^\perp = f_i \cdot f_i^\perp$, and so $\tilde{A}B = \epsilon_{A,B} \sum_{\ii} \|A\|\|P_{A_\ii}B\|  \,\I_\ii$ as well.
\SELF{$= \frac\pi 2 - \theta_i$}

With $\tilde{A}B$ decomposed  in terms of products of principal bivectors oriented from $[A]$ to $[B]$, as above, all coefficients have the same sign $\epsilon_{A,B}$.
\SELF{If $A\pperp_p B$ we can change the sign of $\I_p$, and $\epsilon_{A,B}$ changes to compensate. Terms without $\I_p$ vanish.}
The only effect of swapping $A$ and $B$ (of same grade) is to reorient principal planes, from $[B]$ to $[A]$.
Applying a reversion to \eqref{eq:AB projections} we obtain $\tilde{B}A = \epsilon_{A,B} \sum_{\ii} \|P_{B_\ii}A\|\|B\| \,\tilde{\I}_\ii$,
so that components change sign depending on whether $\tilde{\I}_\ii = \pm \I_\ii$ has an even or odd number of $I_i$'s.
\SELF{$\I_i=0$ for $i\leq d$, otherwise $\tilde{\I}_i=-\I_i$}
In \Cref{sc:Commutator and anticommutator} we relate this to the commutator of blades.

\begin{example}\label{ex:geom product 1a}
	Let $\{f_1,f_2,g_1,g_2\}$ be orthonormal, $e_1=\frac{f_1+3g_1}{\sqrt{10}}$, $e_2=\frac{2f_2+g_2}{\sqrt{5}}$, $A=e_1 e_2$ and $B=f_1 f_2$.
	Then $\beta_A=(e_1,e_2)$ and $\beta_B=(f_1,f_2)$ are associated principal bases of $[A]$ and $[B]$, with $\I_i=g_i f_i$ and $e_i^\perp=g_i$.
	As $\epsilon_{A,B}=1$, all coefficients in $\tilde{A}B = (2+6\I_1+\I_2+3\I_1\I_2)/5\sqrt{2}$ are positive.
	And since $\|A\|=\|B\|=1$, the coefficients are, in order, projection factors of $[A]$ on $[B]=[f_1f_2]$, $[e_1^\perp f_2]$, $[f_1 e_2^\perp]$ and $[e_1^\perp e_2^\perp] = ([B]^\perp)_P$ (the projective subspace of $[B]^\perp$ w.r.t.\! $[A]$). 
	
	In $\tilde{B}A = (2+6\tilde{\I}_1+\tilde{\I}_2+3\tilde{\I}_1\tilde{\I}_2)/5\sqrt{2} = (2-6\I_1-\I_2 +3\I_1\I_2)/5\sqrt{2}$, terms with a single principal bivector switch signs. Projection factors have the same values as before, but now refer to projections from $[B]$ to 
	$[A]=[e_1 e_2]$, $[f_1^\perp e_2]$, $[e_1 f_2^\perp]$ and $[f_1^\perp f_2^\perp] = ([A]^\perp)_P$, where $f_1^\perp = \frac{3f_1-g_1}{\sqrt{10}}$, $f_2^\perp = \frac{f_2-2g_2}{\sqrt{5}}$ and $([A]^\perp)_P$ is the projective subspace of $[A]^\perp$ w.r.t.\! $[B]$.
\end{example}

\subsection{Plücker Coordinates in the Geometric Product}

Consider \Cref{pr:geom prod coord subspaces} with $V=[A]$ and $W=[B]$.
As the coefficients in that decomposition give Plücker coordinates of $V$ w.r.t.\! $\beta_Y$, and these are homogeneous, the same holds for the coefficients in
\begin{equation}\label{eq:AB Yj}
	\tilde{A}B =  \epsilon_{A,B} \|A\|\|B\| \sum_{\jj\in\II_p^{2p-d}} \cos\Theta_{V,Y_\jj} \, \sigma(C_\jj).
\end{equation}

\begin{example}
	In \Cref{ex:geom product 1a}, $[A]$ has Plücker coordinates $(2\!:\!6\!:\!1\!:\!3\!:\!0\!:\!0)$ in the basis $(f_1f_2, e_1^\perp f_2, f_1 e_2^\perp, e_1^\perp e_2^\perp, f_1 e_1^\perp, f_2 e_2^\perp)$ of $\bigwedge^2Y$ obtained from $\beta$.
	The last coordinates vanish as $A$ is partially orthogonal to any coordinate blade having neither $e^\perp_i$ nor $f_i$ for some $i$.
	With $\tilde{B}A$ in terms of $\tilde{\I}_i$'s, we find the same coordinates for $[B]$ in the basis $(e_1e_2, f_1^\perp e_2, e_1 f_2^\perp, f_1^\perp f_2^\perp, e_1f_1^\perp, e_2f_2^\perp)$ 
	obtained from $\{e_1,e_2,f_1^\perp,f_2^\perp\}$.
\end{example}

\begin{example}\label{ex:geom product 2}
	In Fig. \ref{fig:geometric-product}, $[A]$ has normalized Plücker coordinates $(\frac 3 5:\frac 4 5:0)$ w.r.t.\! $(f_1 f_2, f_1 g_2,f_2 g_2)$, and $B=f_1 f_2$.
	From this, the norms and orientations, we obtain $\tilde{A}B = -3 -4\I_2$, where $\I_2= g_2 f_2$ is one of the principal bivectors, oriented from $A$ to $B$. 
	As the first principal plane is degenerate, its principal bivector is $\I_1=0$, so $\tilde{A}B$ has no $4$-vector (this is another way to look at the usual result that $[A]\cap[B]\neq\{0\} \Rightarrow A\wedge B=0$).
	
	\begin{figure}
		\centering
		\includegraphics[width=0.45\linewidth]{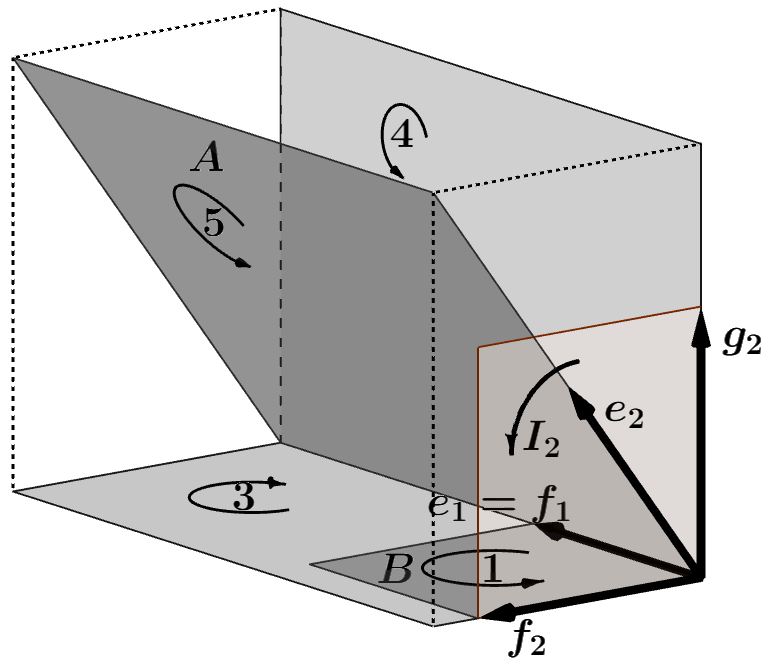}
		\caption{Orthogonal projections of a blade $A$ on coordinate planes of the orthonormal basis $(f_1,f_2,g_2)$. Arcs show orientations, and the numbers inside them are blade areas.}
		\label{fig:geometric-product}
	\end{figure}
\end{example}

The lack of economy of Plücker coordinates is reflected in the geometric product of high grade blades in even larger spaces, which tends to produce a large number of components, linked by many relations.
It is an interesting question whether this is a price to be paid for the algebraic simplicity of this product: if a more economical operation could accomplish the same tasks, would its algebra necessarily be more complicated?

\subsection{Distinct Grades}\label{sc:Blades of different grades}

We can adapt \Cref{pr:geom prod angle bivector} for blades of different grades using PO decompositions. 
In the following result, the same principal bases must be used for the decomposition and to form $\btheta_{A,B}$. 

\begin{proposition}
	Let $A\in\bigwedge^p X$ and $B\in\bigwedge^q X$ be blades.
	\begin{enumerate}[i)]
		\item If $p\leq q$ then $\tilde{A} B = \|A\|\|B\| e^{\btheta_{A,B}} B_\perp$, where $B_\perp$ is an orthogonal subblade of $B$ w.r.t. $A$. \label{it:B larger}
		\item If $p\geq q$ then $\tilde{A} B = \|A\|\|B\| e^{\btheta_{A,B}} \widetilde{A_\perp}$, where $A_\perp$ is an orthogonal subblade of $A$ w.r.t. $B$. \label{it:A larger}
	\end{enumerate}
\end{proposition}
\begin{proof}
	\emph{(\ref{it:B larger})} Follows from \Cref{pr:geom prod angle bivector}, since $\tilde{A}B = (\tilde{A}\,B_P) B_\perp$, $B_P$ has the same grade as $A$, $\|B_P\|=\|B\|$ and $\btheta_{A,B_P} = \btheta_{A,B}$.
	\emph{(\ref{it:A larger})} Similar, using $\tilde{A}B = \widetilde{A_\perp}(\widetilde{A_P}\,B)$ and also that $\widetilde{A_\perp}$ commutes with $e^{\btheta_{A,B}}$, as it is completely orthogonal to all principal bivectors  .
\end{proof}

As the orthogonal subblade is completely orthogonal to all components of $e^{\btheta_{A,B}}$, we actually have $e^{\btheta_{A,B}} \wedge B_\perp$ and $e^{\btheta_{A,B}} \wedge \widetilde{A_\perp}$ in these formulas. 
So, in the geometric product of blades, the smaller one operates only with the projective subblade of the larger blade, preserving the orthogonal subblade.

This result also shows that, while $e^{\btheta_{A,B}}$ and $B_\perp$ (or $A_\perp$) can depend on the choice of principal bases, their product cannot.

\begin{example}\label{ex:geom product 3}
	Let $\{f_1,\ldots,f_4,g_1,g_2\}$ be orthonormal, $e_1=\frac{\sqrt{3}}{2} f_1+\frac 12 g_1$, $e_2=g_2$, $A=e_1 e_2$ and $B=f_1f_2f_3f_4$. 
	Then $\I_1=g_1 f_1$, $\I_2=g_2 f_2$, $\btheta_{A,B} = \btheta_{A,f_1f_2} = \frac\pi6 I_1 + \frac\pi2 I_2$, $e^{\btheta_{A,B}} = \frac{\sqrt{3}}{2}I_2+\frac12\I_1\I_2$ and $\tilde{A}B = \big(\frac{\sqrt{3}}{2}+\frac12\I_1\big)\I_2 f_3f_4$. 
	The common factor $\I_2$ is due to $\theta_2=\frac\pi2$ (as $e_2\perp f_2$, projections of $A$ on coordinate subspaces having $f_2$ instead of $e_2^\perp = g_2$ vanish), and the rest is $B_\perp=f_3f_4$. 
	The coefficients are projection factors of $[A]$ on $[f_1g_2f_3f_4]$ and $[g_1g_2f_3f_4]$, and $[A]$ has Plücker coordinates $(0\!:\!\sqrt{3}\!:\!0\!:\!1\!:\!0\!:\!0)$ in the basis $(f_1f_2,f_1g_2,g_1f_2,g_1g_2,f_1g_1,f_2g_2)$ of $\bigwedge^2[f_1f_2g_1g_2]$.
	Signs in $\tilde{B}A = \big(\frac{\sqrt{3}}{2}-\frac12\I_1\big)\I_2 f_3f_4$ are the result of reverting the $\I_i$'s and $B_\perp$.
\end{example}

\subsection{Principal Angles via Geometric Algebra}\label{sc:Principal angles via geometric algebra}

Hitzer's method \cite{Hitzer2010a} to find principal angles via geometric product can be used to decompose $e^{\btheta_{V,W}}$ or $\tilde{A}B$ in terms of $I_\ii$'s, as follows.

Given subspaces $V,W\subset X$ with $\dim V = p\leq q=\dim W$, compute $\tilde{A}B$ for unit blades $A\in\bigwedge^p V$ and $B\in\bigwedge^q W$ with $\epsilon_{A,B}=1$.
The result is a $(p+q)$-blade if, and only if, all principal angles are $\frac\pi 2$,  in which case any orthonormal bases are associated principal bases.

Assume otherwise, and let $d = \dim (V\cap W)$ and $D = \max\{i:\theta_i\neq \frac\pi 2\}$, whose values we might not know yet. Using \Cref{pr:exp product rotors}, with $R_i=1$ for $i\leq d$, $R_i=I_i$ for $i>D$, and expanding the other $R_i$'s as in \Cref{pr:exp expansion theta_i}, we find that the nonzero components of $\tilde{A}B = e^{\btheta_{A,B}} B_\perp$ are:
\begin{subequations}\label{eq:simplified expansion}
	\begin{align}
		\tilde{A}B &= R_{d+1}\cdots R_D \I_{D+1} \cdots\I_p B_\perp \label{eq:simplifies Rs}\\
		&= \Bigl( c_{d+1}\cdots c_D \label{eq:lowest}\\
		&\quad + s_{d+1}c_{d+2}\cdots c_D \I_{d+1} + \cdots +c_{d+1}\cdots c_{D-1}s_D \I_D \label{eq:second lowest}\\
		&\quad \hspace{4pt} \vdots \nonumber\\
		&\quad + c_{d+1}s_{d+2}\cdots s_D I_{d+2} \cdots I_D + \cdots + s_{d+1}\cdots s_{D-1} c_D \I_{d+1} \cdots I_{D-1} \nonumber \\
		&\quad + s_{d+1}\cdots s_D \I_{d+1} \cdots I_D \Bigr)\, \I_{D+1} \cdots\I_p B_\perp.  \nonumber
	\end{align}
\end{subequations}

So the lowest and highest non-vanishing grades are $p+q-2D$ and $p+q-2d$, and this tells us which $\theta_i$'s are $0$ or $\frac\pi 2$. From orthonormal bases of $V\cap W^\perp$ and $W\cap V^\perp$ we get principal vectors $e_{D+1},\ldots,e_p$ and $f_{D+1},\ldots,f_q$ to form $I_{D+1},\ldots,I_p$ and $B_\perp$.

Also, the product of the non-zero components of second lowest grade \eqref{eq:second lowest} by the inverse of the lowest grade blade \eqref{eq:lowest} is a bivector. Decomposing it into commuting blades we obtain $\tan\theta_{d+1} I_{d+1}+\cdots+\tan\theta_{D} I_{D}$ and find $\theta_i$ and $I_i$ for $d<i\leq D$.

With this we can write the decompositions.
If desired, we can also obtain principal vectors $e_i=f_i$ for $i\leq d$ from an orthonormal basis of $V\cap W$, and for $d<i\leq D$ take unit vectors $e_i \in V\cap[I_i]$ and $f_i \in W\cap [I_i]$ with $e_i\cdot f_i >0$.

\subsection{The Hidden Geometry of Algebraic Properties}

In this section, we use our results to reveal the rich geometry that lies behind some simple and well known algebraic properties of the Clifford product.

\subsubsection{Invertibility}

The invertibility of non-null blades $A\in\bigwedge^p X$ and $B\in\bigwedge^q X$ can be seen as the result of $AB$ carrying all geometric data needed to, given one blade, recover the other one.

If $p=q$ we can see in $A = AB \frac{\tilde{B}}{\|B\|^2} = \frac{\tilde{A}B}{\|B\|^2} B = \bigl(\epsilon_{A,B} \frac{\|A\|}{\|B\|} \,e^{-\btheta_{[B],[A]}} \bigr) B$ how $AB$ has all we need to change the orientation, norm and subspace (via \Cref{pr:theta rotates V to W}) of $B$ into those of $A$.
More precisely, the Plücker coordinates stored in $AB = \sum_\ii \epsilon_{\tilde{A},B} \|A\|\|B\|\pi_{A,B_\ii}  \,\I_\ii$ allow us to locate $[A]$ relative to $[B]$. Each component has all information needed to, using $B$, find one of $A = \sum_{\jj\in\II_p^{2p-d}} P_{C_\jj} A = \sum_{\ii\in\II^{d+1,p}} P_{B_\ii} A$:
\begin{align*}
	\left(\epsilon_{\tilde{A},B} \|A\|\|B\|\pi_{A,B_\ii} \,\I_\ii\right)\big(\tilde{B}/\|B\|^2\big) 
	&= \epsilon_{A,B} \|P_{B_\ii}A\| \,\I_\ii \, B/\|B\| \\
	&= \epsilon_{A,B} \|P_{B_\ii}A\| \,\I_\ii \,\epsilon_B F \\
	&= \epsilon_A \|P_{B_\ii}A\| \,F_\ii \\
	&= P_{B_\ii} A.
\end{align*}
If $p<q$, we obtain the same result with components of $AB = (AB_P)B_\perp$. 
If $p>q$, each component of $AB = (-1)^{p(p-q)}A_\perp(A_P B)$ multiplied by $B^{-1}$ gives $(P_{B_\ii}A_P)A_\perp$.
\SELF{$(-1)^{p(p-q)}A_\perp A_P B B^{-1} = (-1)^{p(p-q)}A_\perp \sum (P_{B_\ii}A_P) = \sum (P_{B_\ii}A_P) A_\perp$}

In particular, inverting $B$ means finding $A$ such that $AB=1$. This $1$ may seem to carry too little information, but it has all we need. The fact that it is a scalar means $AB$ has no orthogonal subblade, so $p=q$, and no $I_\ii$'s, so $[A]=[B]$ as all other Plücker coordinates vanish. Its norm implies $\|A\|=1/\|B\|$, and its sign shows $\epsilon_{\tilde{A},B} = 1$, so $A$ has the orientation of $\tilde{B}$. Putting it all together we find $A = \tilde{B}/\|B\|^2$, as expected.

\subsubsection{The Geometry of $\|AB\|=\|A\|\|B\|$}

The relation \eqref{eq:AB Yj} between the geometric product and the asymmetric angles is more complicated than those we give in \Cref{sc:Subproducts and Grassmann angles} for other products.
This is understandable, since this product includes the others as its components, and carries information about projections on various subspaces. 

Surprisingly, this complexity is behind one of its simplest properties: for blades, $\|AB\|=\|A\|\|B\|$.  The algebraic proof  is deceivingly easy, but not very illuminating, and does not explain what makes this product special in this respect, while others are submultiplicative. 

A geometric proof can use \eqref{eq:AB Yj} and \eqref{eq:Pythagorean identity} to obtain, for same grade blades, $\|AB\|^2 = \|A\|^2\|B\|^2 \sum_\jj \cos^2\Theta_{V,Y_\jj} = \|A\|^2\|B\|^2$.
And if $B$, for example, has larger grade, using a PO decomposition, and since $B_\perp$ is completely orthogonal to $A$ and $B_P$, we also find
$\|AB\| = \|AB_P\| \|B_\perp\| = \|A\|\|B_P\| = \|A\|\|B\|$.

To show what is behind this, we note that \eqref{eq:Pythagorean identity} is in fact a general identity for asymmetric angles with coordinate $p$-subspaces of orthogonal bases \cite{Mandolesi_Grassmann}, that leads to a generalized \Pythagorean\ theorem \cite{Mandolesi_Pythagorean} stating that projections on all such subspaces preserve the total squared volume\footnote{In complex spaces, the (non-squared) volume is the sum of volumes of projections \cite{Mandolesi_Pythagorean}.}, i.e., $\|A\|^2 = \sum_{\jj} \|P_{Y_\jj}A\|^2$.
Writing \eqref{eq:AB Yj} as 
$\tilde{A}B = \epsilon_{A,B} \|B\| \sum_{\jj} \|P_{Y_\jj}A\| \, \sigma(C_\jj)$,
we again find $\|AB\|^2 = \|B\|^2 \sum_\jj \|P_{Y_\jj}A\|^2 = \|A\|^2\|B\|^2$.

So, while the products in \eqref{eq:subproducts} are submultiplicative for blades because they involve projections on single subspaces, which shrink volumes and thus reduce norms, the geometric product preserves norms precisely because it involves projections on all coordinate $p$-subspaces $Y_\jj$.

\subsubsection{Duality}\label{sc:Duality}

The relation\footnote{We adopt the convention that $^*$ takes precedence over the product, so $AB^*$ means $A(B^*)$.} $(AB)^* = AB^*$, where $^*$ denotes the dual obtained via product with $J^{-1}$ for a given unit pseudoscalar $J$, is algebraically trivial, a mere consequence of the associativity of the geometric product.
But it expresses a duality between $AB$ and $AB^*$ which, as we show, is linked to another one between $e^{\btheta_{V,W}}$ and $e^{\btheta_{V,W^\perp}}$, reflecting a symmetry  that swaps sines and cosines in the components of \Cref{pr:exp expansion theta_i}. 

Let $A\in\bigwedge^p X$ and $B\in\bigwedge^q X$ be unit blades, $V=[A]$ and $W=[B]$, with associated principal bases $\beta_V=(e_1,\ldots,e_p)$ and $\beta_W=(f_1,\ldots,f_q)$, and  principal angles $\theta_1,\ldots,\theta_m$ for $m=\min\{p,q\}$, and let $e_i^\perp$, $I_i$, $R_i$ be as before.

We consider first a case with $p=q$ and $V\cap W=\{0\}$, and take duals w.r.t.\! $J=I_1I_2\cdots I_p$ (for which $[J] = V\oplus W$).
Completing $(e_1^\perp,\ldots,e_p^\perp)$ to a principal basis of $W^\perp$ w.r.t.\! $V$, we obtain $\btheta_{V,W^\perp} = \sum_{i=1}^p (\frac\pi2-\theta_i) \tilde{I_i}$ and principal rotors $R_i^\perp = e_ie^\perp_i = R_i \tilde{I}_i = \sin\theta_i + \tilde{I}_i \cos\theta_i$.

\begin{proposition}
	Under the above conditions	we have $(e^{\btheta_{V,W}})^* = e^{\btheta_{V,W^\perp}}$ and $(e^{\btheta_{A,B}})^* = e^{\btheta_{A,B^*}}$.
\end{proposition}
\begin{proof}
	\Cref{pr:exp product rotors} gives $(e^{\btheta_{V,W}})^* = R_1\cdots R_p \tilde{I_p}\cdots \tilde{I_1} = R_1^\perp\cdots R_p^\perp = e^{\btheta_{V,W^\perp}}$. 
		\SELF{$J^{-1} = \tilde{I_p}\cdots \tilde{I_1}$}
	And using \eqref{eq:principal decomposition} we find $B^* = \epsilon_ B f_1 f_2\cdots f_p \tilde{I_p}\cdots \tilde{I_1} = \epsilon_B e_1^\perp\cdots e_p^\perp$, so $\epsilon_{A,B^*} = \epsilon_{A,B}$ and $[B^*]$ is the projective subspace of $W^\perp$ w.r.t.\! $V$.  Thus the second identity follows from the first one and \Cref{pr:exp theta+ exp theta}.
\end{proof}

\Cref{pr:geom prod angle bivector} shows, at least under the above conditions, that this duality of exponentials lies behind $(AB)^* = AB^*$.
Even better, expanding principal rotors as in \Cref{pr:exp expansion theta_i}, we observe a component-wise duality,
\begin{align*}
	(e^{\btheta_{V,W}})^* 
	&= (c_1\cdots c_p)^* + (s_1c_2\cdots c_p I_1)^* + \cdots + (s_1\cdots s_p I_1\cdots I_p)^* \\
	&= c_1\cdots c_p \tilde{I_1}\cdots \tilde{I_p} + s_1c_2\cdots c_p\tilde{I_2}\cdots \tilde{I_p} + \cdots + s_1\cdots s_p = e^{\btheta_{V,W^\perp}},
\end{align*}
so each component of grade $k$ in $AB$ is dual to one of grade $2p-k$ in $AB^*$, generalizing the dualities \cite[p.\,82]{Dorst2007} $(A\wedge B)^* = A\glcontr B^*$ and $(A\glcontr B)^* = A\wedge B^*$ between outer product and contraction\footnote{See \Cref{df:subproducts}.}
(or Hestenes inner product \cite[p.\,23]{Hestenes1984clifford}).

In the general case ($p\neq q$, $V\cap W\neq\{0\}$, duals w.r.t.\! the whole space), the duality between exponentials becomes more complicated, but it still has the same kind of symmetry, even if the corresponding grades are different. Taking duals in \eqref{eq:simplifies Rs} w.r.t. $J=f_1\cdots f_d I_{d+1}\cdots I_p f_{p+1}\cdots f_q$ (for which $[J]=V+W$) we find $(\tilde{A}B)^* = (R_{d+1}\cdots R_D \I_{D+1} \cdots\I_p f_{p+1}\cdots f_q)^* = R_D^\perp\cdots R_{d+1}^\perp f_d\cdots f_1$, and likewise for $\tilde{A}B^*$.
\SELF{Ignoring signs, 
	$B^*=e^\perp_{d+1}\cdots e^\perp_p$, and as $R^\perp_i= e_ie_i^\perp$ if $i>d$ ($=1$ if $i>D$) we have
	$AB^* = e_1\cdots e_d R^\perp_{d+1}\cdots R^\perp_D$, and $e_i=f_i$ if $i\leq d$)}
So, except for the extra blades shifting the grade correspondence, the duality between $AB$ and $AB^*$ is again due to $R_i$'s turning into $R_i^\perp$'s. 
Taking duals w.r.t.\! the whole space, another grade shift makes each component of grade $k$ in $AB$ dual to one of grade $n-k$ in $AB^*$, where $n=\dim X$.

\section{Other Geometric Algebra Products}\label{sc:Other products}

Let $A\in\bigwedge^p X$ and $B\in\bigwedge^q X$ be blades. 
As is known, $AB$ can have nonzero components of grades $|q-p|, |q-p|+2,\ldots,p+q$, with the first and last ones (and also some of the vanishing ones) giving useful \emph{component subproducts}\footnote{The geometric product can be defined axiomatically and the other products obtained as its components \cite{Dorst2002,Hestenes1984clifford,Hitzer2012}, or it can be defined using Grassmann algebra products, taken as more fundamental ones \cite{Dorst2007,Gull1993imaginary}. The different approaches are discussed in \cite{Lounesto2001}.}.
Most of these products are well known, and we have already been using some, but we provide here a definition for easy reference (for general multivectors, they are extended linearly):

\begin{definition}\label{df:subproducts}
	The \emph{scalar product} $A*B$, \emph{left and right contractions}\footnote{These are Dorst's symbols \cite{Dorst2002}. Lounesto \cite{Lounesto1993} uses $\lrcorner$ and $\llcorner$, which we will reserve for a slightly different contraction in \Cref{sc:Products with reversions} and \Cref{sc:Grassmann products}.} 
	$A\glcontr B$ and $A\grcontr B$, \emph{(fat) dot product} $A\bullet B$, and \emph{outer product} $A\wedge B$ are, respectively, the components of grades $0$, $q-p$, $p-q$, $|q-p|$ and $p+q$ of $AB$ (a negative grade means the component is $0$).
	\emph{Hestenes inner product} $A\cdot B$ is the $|q-p|$ component if $p,q\neq 0$, vanishing otherwise\footnote{This is the definition from \cite{Hestenes1984clifford}. In later works \cite{Hestenes2005}, Hestenes removes the exceptionality of the scalar case, so that $A\cdot B = A\bullet B$ for all $p,q\geq 0$.}.
\end{definition} 

Contractions and fat dot product, introduced by Lounesto \cite{Lounesto1993} and Dorst \cite{Dorst2002}, are less known alternatives to Hestenes inner product. These products are related by
\begin{equation}\label{eq:dots contractions}
	A\bullet B = \begin{cases}
		A\glcontr B \ \text{ if } p\leq q, \\
		A\grcontr B \ \text{ if } p\geq q,
	\end{cases}
	\ \text{ and }\quad 
	A\cdot B = 
	\begin{cases}
		A\bullet B \ \text{ if } p,q\neq 0, \\
		0 \hspace{23pt} \text{ otherwise.}
	\end{cases}
\end{equation}
Left and right contractions are related by $A\glcontr B = (\tilde{B}\grcontr\tilde{A})^\sim$, and using a PO decomposition we obtain $A\glcontr B = (A*B_P) B_\perp$.
When grades are distinct, they are asymmetric (in general, $A\glcontr B \neq B\glcontr A$ and $A\grcontr B \neq B\grcontr A$), with $A\glcontr B = 0$ if $p>q$, and $A\grcontr B = 0$ if $p<q$.

As noted by Lounesto \cite[p.\,291]{Lounesto2001}, contractions have better properties than $A\cdot B$ (or $A\bullet B$).
Dorst \cite{Dorst2001,Dorst2002} also advocates for their use, arguing that identities involving $A\cdot B$ often depend on grade conditionals, which tend to accumulate as they are combined, while with contractions ``known results are simultaneously generalized and more simply expressible, without conditional exceptions'' \cite[p.\,136]{Dorst2001}, since their asymmetry allows them to `switch off' automatically when conditions fail to hold.

In \Cref{sc:Subproducts and Grassmann angles} we relate these subproducts to our angles. \Cref{sc:Commutator and anticommutator} discusses other products: the usual commutator and an anticommutator.

\subsection{Component Subproducts}\label{sc:Subproducts and Grassmann angles}

There is a reason why only certain components of the geometric product give interesting subproducts.
If $p=q$, rewriting \eqref{eq:AB projections} as
\begin{equation}\label{eq:AB Theta}
	AB = \epsilon_{\tilde{A},B}\|A\|\|B\|  \sum_{\ii\in\II^{d+1,p}} \cos\Theta_{[A],[B_\ii]} \,\I_\ii,
\end{equation}
we see how the components of grades $0,2,4,\ldots,2p$ (in fact, at most $2(p-d)$, where $d=\dim([A]\cap [B])$) are formed. We can also understand why $\inner{AB}_0$ and $\inner{AB}_{2p}$ are the most relevant ones, giving the products of \Cref{df:subproducts}: they describe projections on $[B]$ and $[B]^\perp$, while other components involve less important subspaces.
If $p\neq q$, the orthogonal subblade increases grades by $|q-p|$, so now $\inner{AB}_{|q-p|}$ and $\inner{AB}_{p+q}$ are most relevant.

Identifying the appropriate components in \eqref{eq:AB Theta}, we obtain formulas relating the subproducts to our various angles.

\begin{theorem}\label{pr:products angles} 
	For any blades $A,B\in\bigwedge X$, 
	\begin{subequations}\label{eq:subproducts}
		\begin{align}
			|A*B| &= \|A\|\|B\|\cos\hat{\Theta}_{[A],[B]}, \label{eq:star prod Theta}\\
			\|A\glcontr B\| &= \|A\|\|B\|\cos\Theta_{[A],[B]}, \label{eq:geom lcontr Theta} \\
			\|A\grcontr B\| &= \|A\|\|B\|\cos\Theta_{[B],[A]}, \label{eq:geom rcontr Theta}\\
			\|A\bullet B\| &= \|A\|\|B\|\cos\check{\Theta}_{[A],[B]}, \label{eq:fat dot symmetrized angle} \\
			\|A\cdot B\| &= \|A\|\|B\|\cos\check{\Theta}_{[A],[B]} \quad (A, B \text{ non-scalars}), \label{eq:Hestenes symmetrized angle} \\
			\|A\wedge B\| &= \|A\|\|B\|\cos\Theta_{[A],[B]}^\perp. \label{eq:outer prod Theta perp}
		\end{align}
	\end{subequations}
\end{theorem}
\begin{proof}
	Let $p=\grade(A)$ and $q=\grade(B)$. 
	If $p=q$, the first 5 products are the $\ii=0$ component in \eqref{eq:AB Theta}, with angle $\Theta_{[A],[B]} = \Theta_{[B],[A]} = \hat{\Theta}_{[A],[B]} = \check{\Theta}_{[A],[B]}$, by \Cref{pr:properties Grassmann}\,\emph{\ref{it:Theta star}}. 
	If, moreover, d=0, then $A\wedge B$ is the $\ii=(1,\ldots,p)$ component, for which $\Theta_{[A],[B_\ii]} = \Theta_{[A],[B]^\perp}$, otherwise $A\wedge B = 0$ and $\Theta_{[A],[B]^\perp} = \frac\pi2$, by \Cref{pr:complementary simple cases}\,\emph{\ref{it:Theta perp pi 2}}.  
	
	If $p\neq q$ then $A*B=0$ and $\hat{\Theta}_{[A],[B]} = \frac\pi2$, so \eqref{eq:star prod Theta} holds.
	Using a PO decomposition of the larger blade, the unit orthogonal subblade increases grades but preserves norms. So if $p<q$ we have $\|\inner{AB}_{r+q-p}\| = \|\inner{AB_P}_r\|$, and taking $r=0$ and $r=2p$ we find, using \Cref{pr:Thetas different dim}, that \eqref{eq:geom lcontr Theta} and \eqref{eq:outer prod Theta perp} remain valid.
	Also, in this case both sides of \eqref{eq:geom rcontr Theta} vanish.
	The case $p>q$ is similar, and \eqref{eq:fat dot symmetrized angle} and \eqref{eq:Hestenes symmetrized angle} follow from \eqref{eq:dots contractions}.
\end{proof}

The geometric algebra literature has analogous results for the angles of \Cref{sc:GA angles}.
Hestenes angle definition is similar to \eqref{eq:star prod Theta}.
Hitzer  also gives it, 
\SELF{but it should have $|\cdot|$, as his angle is in $[0,\frac\pi 2]$.}
and a result like \eqref{eq:outer prod Theta perp} but with the product of sines of principal angles, which is not interpreted in terms of a single angle.
Dorst's description of contraction norm for $p\leq q$ corresponds to \eqref{eq:geom lcontr Theta}.
\SELF{quase dá a fórmula na p. 596}

Dorst's contention against $A\bullet B$ and $A\cdot B$ is supported by \eqref{eq:fat dot symmetrized angle} and \eqref{eq:Hestenes symmetrized angle}, as the min-symmetrized angle has worse properties.
Contractions, on the other hand, are related to the asymmetric angle, and their asymmetries perfectly match each other, with both sides of \eqref{eq:geom lcontr Theta} and \eqref{eq:geom rcontr Theta} vanishing as appropriate.
Dorst's defense of them agrees with our experience in using this angle: its asymmetry allows simpler proofs and more general statements, in which special cases are handled automatically, without us having to keep track of grades or dimensions.
For example, with the (symmetric) Hestenes inner product even the following simple result would be hindered by grade conditionals, requiring $0<\grade(A) \leq \grade(B)$.

\begin{corollary}\label{pr:contraction pperp}
	For any blades $A,B\in\bigwedge X$, $\| A\glcontr B\| = \|P_{B} A\|\| B\|$. Also, $A\glcontr B=0 \Leftrightarrow A\pperp B$.
\end{corollary}

The symmetry of $\Theta^\perp$ is reflected in \eqref{eq:outer prod Theta perp}, which, ironically,  depends on the asymmetry of $\Theta$.
For example, $A\wedge B=0$ for any $A,B\in\bigwedge^2\R^3$, but $\Theta_{[A],[B]^\perp}$ could assume any value if it were the usual (symmetric) angle between a plane $[A]$ and a line $[B]^\perp$.
Without asymmetry, \eqref{eq:outer prod Theta perp} would need the hypothesis $[A]\cap[B]=\{0\}$, as in analogous results relating volumes of parallelotopes \cite{Afriat1957} and matrix volumes \cite{Miao1992} to products of sines of principal angles.

For $v,w\in X$, \eqref{eq:outer prod Theta perp} gives the usual $\|v\wedge w\| = \|v\|\|w\|\sin\theta_{v,w}$, as \eqref{eq:Theta perp prod sin} reduces to a single sine.
We could put the formula for $\|A\wedge B\|$ in this familiar form using the sine of an angle $\Theta'_{V,W} = \frac \pi 2-\Theta^\perp_{V,W}$, which however does not have a nice interpretation in $\bigwedge X$ as $\Theta^\perp_{V,W}$ does \cite{Mandolesi_Grassmann}.

For simplicity, \Cref{pr:products angles} presented just the product norms. Now we give more detailed formulas, with oriented angles. We omit $\tilde{A}\cdot B$ and $\tilde{A}\bullet B$, which can be obtained from the contractions via \eqref{eq:dots contractions}.

	
\begin{proposition}\label{pr:detailed subproducts}
	Let $ A\in\bigwedge^p X$ and $ B\in\bigwedge^q X$ be nonzero blades, $(e_1,\ldots,e_p)$ and $(f_1,\ldots,f_q)$ be associated principal bases of $[A]$ and $[B]$, with orthoprincipal vectors $e_{d+1}^\perp,\ldots,e_m^\perp$ and $f_{d+1}^\perp,\ldots,f_m^\perp$, where $d = \dim([A]\cap [B])$ and $m=\min\{p,q\}$, and $A_\perp$ and $B_\perp$ be the corresponding orthogonal subblades. If $d\neq 0$ let $J=0$, otherwise let $J = e_1^\perp e_2^\perp\cdots e_p^\perp f_1 f_2\cdots f_q$ if $p\leq q$, and $J = e_1 e_2\cdots e_p f_1^\perp f_2^\perp\cdots f_q^\perp$ if $p\geq q$. Then
	\begin{subequations}
		\begin{align}
			\tilde{A}*B &= \|A\|\|B\|\cos\hat{\Theta}_{A,B}, \label{eq:star detailed}\\
			\tilde{A}\glcontr B &= \|A\|\|B\|\cos\Theta_{A,B} \, B_\perp, \label{eq:lcontr detailed}\\
			\tilde{A}\grcontr B &= \|A\|\|B\|\cos\Theta_{B,A} \, \widetilde{A_\perp}, \label{eq:rcontr detailed}\\
			A\wedge B &= \|A\|\|B\|\cos\Theta_{A,B}^\perp\,J. \label{eq:outer detailed}
		\end{align}
	\end{subequations}
	%
	%
\end{proposition}
\begin{proof}
	\eqref{eq:star detailed} Follows from \eqref{eq:star prod Theta}, \Cref{df:oriented symm complem} and the relation between $\epsilon_{A,B}$ and $\tilde{A}*B$.
	\eqref{eq:lcontr detailed} $\tilde{A}\glcontr B = (\tilde{A}*B_P) B_\perp$, and $\hat{\Theta}_{A,B_P} = \Theta_{A,B_P} = \Theta_{A,B}$, by \Cref{pr:Thetas different dim}.
	\eqref{eq:rcontr detailed} Similar.
	\SELF{$\tilde{A}\grcontr B = \tilde{A}_\perp (\tilde{A}_P*B) 
		= \epsilon_{A,B} \|A\|\|B\| \cos\Theta_{B,A} \tilde{E}_\perp$}
	\eqref{eq:outer detailed} Assume $d = 0$, otherwise both sides vanish. For $p\leq q$, \eqref{eq:AB Theta} gives $A\wedge B = \epsilon_{\tilde{A},B} \|A\|\|B\| \cos\Theta_{[A],[B]}^\perp \,I_1\cdots I_p\,B_\perp$. The reordering of $e_i^\perp$'s from $I_i=e_i^\perp f_i$ cancels the reversion in $\epsilon_{\tilde{A},B}$, and we use \Cref{df:oriented symm complem}. The case $p>q$ is similar.
	\SELF{$A\wedge B = (-1)^{pq} B\wedge A = (-1)^{pq} \epsilon_{B,A} \|B\|\|A\|$ \\ $\cos\Theta_{B,A}^\perp F^\perp \wedge E =$ \\ $\epsilon_{A,B} \|A\|\|B\| \cos\Theta_{A,B}^\perp \, E \wedge F^\perp$}
\end{proof}

Note that both definitions of $J$ give the same result when $p=q$. 
If $d=0$ then $[J]=[A]\oplus[B]$. For $d\neq 0$ we defined $J=0$ for simplicity, but, since $\Theta_{A,B}^\perp = \frac\pi2$ anyway (by \Cref{pr:complementary simple cases}\,\emph{\ref{it:Theta perp pi 2}}), one might as well use some $J$ for which $[J]=[A]+[B]$.
If $p\neq q$ then $\Theta_{A,B}$ and $\Theta_{B,A}$ depend on the choice of principal bases, but in a way that offsets changes in $B_\perp$ and $A_\perp$.

\subsubsection{Products and Reversions}\label{sc:Products with reversions}

As noted after \Cref{pr:geom prod angle bivector}, the geometric product $AB$ involves the relative orientation of $\tilde{A}$ and $B$. The same holds for its component subproducts, and this makes interpreting the resulting orientations less immediate, even unfeasible if $\grade(A)$ is unknown.
To compensate and obtain more geometrically meaningful orientations, these products are often combined with a reversion. 
For example, the Grassmann algebra inner product is $\inner{A,B} = \tilde{A}*B$, the norm is $\|A\|=(\tilde{A}*A)^{\frac12}$, and a slightly different contraction given by $A\lcontr B = \tilde{A}\glcontr B$ (used in \cite{Rosen2019} and in \Cref{sc:Grassmann products}) has its orientation directly related to those of $A$ and $B$ (\Cref{pr:contr orient}). 
This is another geometric price to be paid for algebraic simplicity. 
A product given by $A\diamond B = \tilde{A}B$ would involve $\epsilon_{A,B}$ directly, but would not be associative, making inverses less useful.

The outer product is the only subproduct inheriting the associativity of $AB$, and the only one whose orientation with a reversion ($\tilde{A}\wedge B$) seems less natural.
The proof of \eqref{eq:outer detailed} shows why $A\wedge B$ reflects the orientations of $A$ and $B$ directly: the reversion in $\epsilon_{\tilde{A},B}$ disappears as we reorder the $e_i^\perp$'s to match the usual convention of having first the vectors of $A$ (or their components orthogonal to $B$) then those of $B$, ordered according to the orientation of each blade.

But this begs the question of the reason for such convention.
Of course, it is appropriate that the orientations of the geometric object formed by joining $A$ and $B$ and of the algebraic element $A\wedge B$ used to represent it match each other. But, geometrically, orienting by $A\vartriangle B = \tilde{A}\wedge B$ (first the vectors of $A$ in reverse order, then those of $B$) would have been fine.
Again, the answer lies in algebraic simplicity: $A\vartriangle B$ would have been a worse product, having neither associativity nor alternativity.

\subsection{Commutator and Anticommutator}\label{sc:Commutator and anticommutator}

Let $M,N\in\bigwedge X$.
The \emph{commutator} $M\times N = (MN-NM)/2$ is mainly used with $M$ or $N$ being a bivector, as in this case it preserves grades, and $(\bigwedge^2 X,\times)$ is a Lie algebra \cite{Hestenes1984clifford}. 
But the whole $(\bigwedge X,\times)$ is also a Lie algebra, 
\SELF{https://math.stackexchange.com/questions/841028/what-good-is-the-commutator-product}
and, as we show, the commutator of blades has nice properties for all grades, suggesting that this product may be more interesting than usually recognized.

The \emph{anticommutator} $M\acom N=(MN+NM)/2$ turns the Clifford algebra into a special Jordan algebra \cite{Mccrimmon2003}. It does not seem to have attracted the attention of geometric algebraists, which is strange, given that Clifford algebras are often built from generators satisfying some anticommutation relation (e.g.,  for the Dirac algebra \cite{Hestenes1966} we have $\gamma_\mu \gamma_\nu + \gamma_\nu \gamma_\mu = 2\eta_{\mu\nu}$, which is reduced to $\gamma_\mu \acom \gamma_\nu = \eta_{\mu\nu}$).

These products are linked by the following identities.

\begin{proposition}\label{pr:propriedades acom com}
	Let $M,N\in\bigwedge X$.
	\begin{enumerate}[i)]
		\item $M\acom N + M\times N = MN$. \label{it:acom+com}
		\item $M\acom N - M\times N = NM$. \label{it:acom-com}
		\item $\|M\acom N\|^2 + \|M\times N\|^2 = \frac{\|MN\|^2+\|NM\|^2}{2}$. \label{it:|acom|2+|com|2}
	\end{enumerate}
\end{proposition}
\begin{proof}
	\emph{(\ref{it:|acom|2+|com|2})} 
	$(\widetilde{MN}+\widetilde{NM}) * (MN+NM) + (\widetilde{MN}-\widetilde{NM}) * (MN-NM) = 2(\widetilde{MN})*(MN) + 2(\widetilde{NM})*(NM)$.
\end{proof}

The trivial identity \emph{(\ref{it:acom+com})} becomes more interesting for blades $A$ and $B$, in which case the components of $AB$ are distributed between $A\acom B$ and $A\times B$ according to their grades.
This can be proven algebraically, analyzing the signs of $(A\acom B)^\sim = \tilde{A}\acom \tilde{B} = \pm A\acom B$ and $(A\times B)^\sim = -\tilde{A}\times \tilde{B} = \mp A\times B$.
	\SELF{$= (-1)^{p(q-1)+1} (-1)^{\frac{d(d-1)}{2}} A\times B$, where $d=q-p$}
We give a more geometric argument, which will be useful later.

\begin{proposition}\label{pr:commutator distinct grades}
	Given blades $A\in\bigwedge^p X$ and $B\in\bigwedge^q X$, with $p\leq q$, and a PO decomposition of $B$ w.r.t.\! $A$, we have
	\begin{equation*}
		A\times B = 
		\begin{cases}
			(A \times B_P) \wedge B_\perp = \sum\limits_{k=0}^\infty \inner{AB}_{q-p+4k+2} \ \text{ if }\, p(q-1) \text{ is even,} \\
			(A \acom B_P) \wedge B_\perp = \sum\limits_{k=0}^\infty \inner{AB}_{q-p+4k} \hspace{10pt} \ \text{ if }\, p(q-1) \text{ is odd,}
		\end{cases}
	\end{equation*}
	and likewise for the anticommutator, with the conditions swapped.
\end{proposition}
\begin{proof}
	As seen in \Cref{sc:geometric product}, if $p=q$ the only difference between $\tilde{A}B$ and $\tilde{B}A$ is that components with an odd number of $I_i$'s switch signs. Therefore $A\times B = (-1)^{\frac{p(p-1)}{2}} (\tilde{A}B-\tilde{B}A)/2$ has the components of grades $4k+2$ ($k\in\N$) of $AB$, while $A\acom B$ has those of grades $4k$.
	And when $p<q$ we have $2A\times B = AB_PB_\perp - B_PB_\perp A = \left(AB_P -(-1)^{p(q-p)} B_PA\right) B_\perp$, and likewise for the anticommutator.
\end{proof}

\begin{example}
	In $\bigwedge\R^2$, for $M=i$ and $N=1+ij$ we have $M\acom N = i$ and $M\times N = j$, which decompose $MN=i+j$, but not by grades. This shows it is not enough that one of the multivectors be a blade.
\end{example}


\begin{proposition}\label{pr:propriedades acom com blades}
	Let $A,B\in\bigwedge X$ be blades. 
	\begin{enumerate}[i)]
		\item $(A\acom B)\times(A\times B)=0$.\label{it:acom com com}
		\item $(A\acom B)*(A\times B)=0$.\label{it:acom * com}
		\item $(A\acom B)^2 - (A\times B)^2 = A^2B^2$.\label{it:acom2-com2}
			\SELF{Cuidado, $A^2B^2 \neq (AB)^2$}
		\item For unit blades of same grade, $(A\acom B)^2 - (A\times B)^2 = 1$.\label{it:acom2-com2 1}
		\item $\|A\acom B\|^2 + \|A\times B\|^2 = \|A\|^2 \|B\|^2$. \label{it:|acom|2+|com|2 blades} 
	\end{enumerate}	 
\end{proposition}
\begin{proof}
	\emph{(\ref{it:acom com com})} $(AB+BA)(AB-BA) - (AB-BA)(AB+BA) = 2 BAAB - 2 ABBA = 0$, since for blades $A^2$ and $B^2$ are scalars.
	\emph{(\ref{it:acom * com})} By \Cref{pr:commutator distinct grades}, $A\acom B$ and $A\times B$ have no common grades.
	\emph{(\ref{it:acom2-com2})} $(AB+BA)^2 - (AB-BA)^2 = 2ABBA + 2BAAB = 4A^2B^2$.
	\emph{(\ref{it:acom2-com2 1})} For unit $p$-blades, $A^2=B^2=(-1)^\frac{p(p-1)}{2}$.
	\emph{(\ref{it:|acom|2+|com|2 blades})} Follows from \Cref{pr:propriedades acom com}\emph{\ref{it:|acom|2+|com|2}}.
\end{proof}

\begin{example}
	Let $M=N=1+i\in\bigwedge \R^2$. Then $MN=NM=M\acom N=2+2i$ and $M\times N=0$. One can check that, while \Cref{pr:propriedades acom com}\emph{\ref{it:|acom|2+|com|2}} holds, \Cref{pr:propriedades acom com blades}\emph{\ref{it:|acom|2+|com|2 blades}} does not (as these are not blades).
\end{example}

These products are related to hyperbolic functions (see \Cref{sc:Hyperbolic Functions}) of the angle bivector.

\begin{theorem}\label{pr:commutators hyperbolic}
	For blades $A,B\in\bigwedge^p X$ we have $\tilde{A}\acom B = \|A\|\|B\|\cosh \btheta_{A,B}$ and $\tilde{A}\times B = \|A\|\|B\|\sinh \btheta_{A,B}$. 
\end{theorem}
\begin{proof}
	As grades are equal, \Cref{pr:geom prod angle bivector} gives $\tilde{A}\acom B = (\tilde{A}B+\tilde{B}A)/2 = \|A\|\|B\| (e^{\btheta_{A,B}} + e^{-\btheta_{A,B}})/2$, and likewise for $\tilde{A}\times B$.
\end{proof}

These formulas may give the impression that these products are not submultiplicative for blades, which is false, by \Cref{pr:propriedades acom com}\emph{\ref{it:|acom|2+|com|2 blades}}. What happens is that, as the $I_i$'s in $\btheta_{A,B}$ square to $-1$, these hyperbolic functions expand in terms of trigonometric functions of the $\theta_i$'s.
Indeed, for $V=[A]$ and $W=[B]$ we have $\cosh \btheta_{A,B} = \epsilon_{A,B} \cosh\btheta_{V,W}$ (likewise for $\sinh$), and $\cosh\btheta_{V,W}$ (resp. $\sinh$) is given by the terms in \Cref{pr:exp expansion theta_i} with an even (resp. odd) number of $I_i$'s, so that $\|\cosh \btheta_{A,B}\|^2 + \|\sinh \btheta_{A,B}\|^2 = \|e^{\btheta_{A,B}}\|^2 = 1$ (as in \Cref{pr:properties cosh sinh homog}\emph{\ref{it:r23}}).

\begin{example}
	Let $c_i=\cos\theta_i$, $s_i=\sin\theta_i$, and $p=\dim V=\dim W$.
	If $p=2$ then $\cosh \btheta_{V,W} = c_1c_2 + s_1s_2I_1I_2$ and $\sinh \btheta_{V,W} = s_1c_2I_1 + c_1s_2I_2$.
	If $p=3$ then $\cosh \btheta_{V,W} = c_1c_2c_3 + s_1s_2c_3I_1I_2 + s_1c_2s_3I_1I_3 + c_1s_2s_3I_2I_3$ and $\sinh \btheta_{V,W} = s_1c_2c_3I_1 + c_1s_2c_3I_2+c_1c_2s_3I_3 + s_1s_2s_3I_1I_2I_3$.
\end{example}

With \Cref{pr:commutators hyperbolic}, we can see that many properties presented here reflect others of hyperbolic functions given in \Cref{sc:Hyperbolic Functions}.
For example, \Cref{pr:propriedades acom com blades}\emph{\ref{it:acom2-com2 1}} corresponds directly to \ref{pr:properties cosh sinh M}\emph{\ref{it:cosh2-sinh2}}.
If grades are distinct, the orthogonal subblade can demand some extra effort from us to see the correspondence, but it exists. 
For example, consider \Cref{pr:propriedades acom com blades}\emph{\ref{it:acom2-com2}} with $\grade(A)=p \leq q=\grade(B)$. Using a PO decomposition and \Cref{pr:commutator distinct grades}, and since $B_\perp$ commutes with the even multivectors $A\acom B_P$ and $A\times B_P$, we have
\begin{align*}
	(A\acom B)^2 - (A\times B)^2 
	&= (-1)^{(q-1)p} \bigl[ (A\acom B_P)^2 - (A\times B_P)^2 \bigr] B_\perp^2 \\
	&= (-1)^{(q-p)p} \|A\|^2\|B_P\|^2 (\cosh^2 \btheta_{A,B} - \sinh^2 \btheta_{A,B}) B_\perp^2 \\
	&= (-1)^{(q-p)p} A^2 B_P^2 B_\perp^2 \\
	&= A^2 (B_P B_\perp)^2 = A^2B^2. 
\end{align*}


\begin{proposition}\label{pr:hyperbolic=0}
	Given blades $A,B\in\bigwedge^p X$, let $r$ be the number of principal angles that are $\frac\pi2$. Then $\cosh \btheta_{A,B} = 0$ (resp. $\sinh$) if, and only if, $r$ is odd (resp. even) and any other principal angle is $0$.
\end{proposition}
\begin{proof}
	If $\theta_1\leq\cdots\leq\theta_p$ are the principal angles then $c_i = \cos\theta_i \neq 0 $ for $i\leq p-r$, and $s_i = \sin\theta_i \neq 0$ for $i>p-r$.
	As $\cosh \btheta_{A,B}$ has components with all possible products of an even number of $s_i$'s, multiplied by $c_i$'s of the other indices, if $r$ is even it has a component with $c_1\cdots c_{p-r}s_{p-r+1}\cdots s_p \neq 0$.
	So $\cosh \btheta_{A,B} = 0$ implies $r$ is odd. But then it will have a component with $c_1\cdots c_{p-r-1}s_{p-r}\cdots s_p$, which must vanish, so $\theta_{p-r}=0$. 
	Conversely, if $r$ is odd and $\theta_{p-r}=0$, all components will have some $c_i=0$ or some $s_i=0$. The proof for $\sinh\btheta_{A,B}$ is similar.
\end{proof}


\begin{proposition}
	Blades $A\in\bigwedge^p X$ and $B\in\bigwedge^q X$, with $p\leq q$, commute (resp. anticommute) if, and only if, $A=M A'$ and $B=M B' B''$ for completely orthogonal blades $M, A', B', B''$, with $\grade(A') = \grade(B')$ having the same (resp. opposite) parity of $p(q-1)$.
\end{proposition}
\begin{proof}
	Follows from \Cref{pr:commutators hyperbolic,pr:hyperbolic=0} if $p=q$ (with $B''=1$), and also \Cref{pr:commutator distinct grades} if $p<q$ (with $B''=B_\perp$).
\end{proof}

\begin{example}
	In \Cref{ex:angle bivector,ex:oriented angle bivector}, the principal angles are $(0,\frac\pi2,\frac\pi2)$, $E\acom F = \cosh \btheta_{E,F} = e_2f_2e_3f_3$ and $E\times F = \sinh \btheta_{E,F} = 0$.
	We also have $(E\acom F)^2 = 1$ and $E^2=F^2=-1$.
\end{example}

\begin{example}
	In \Cref{ex:geom product 1a}, $A\acom B = -(2+3\I_1\I_2)/5\sqrt{2}$  has the terms of $AB = -\tilde{A}B$ that do not depend on the direction of the projections (from $[A]$ to $[B]$ or vice-versa), and $A\times B = -(6\I_1+\I_2)/5\sqrt{2}$ has those that do. 
	We have $(A\acom B)^2 = (13+12I_1I_2)/50$, $(A\times B)^2=(-37+12I_1I_2)/50$ and $A^2 = B^2 = -1$.
	Also, $\|A\acom B\|^2=\frac{13}{50}$, $\|A\times B\|^2=\frac{37}{50}$, and $\|A\|=\|B\|=1$.
	Finally, $(A\acom B)\times(A\times B)=0$, since the $I_i$'s commute, and $(A\acom B)*(A\times B)=0$, as their components have no common grades.
\end{example}

\begin{example}
	In \Cref{ex:geom product 3}, $\cosh\btheta_{A,B} = \frac{1}{2}I_1I_2$ and $\sinh\btheta_{A,B} = \frac{\sqrt{3}}{2}\I_2$, $A\acom B = (-\tilde{A}B+\tilde{B}A)/2 = -\frac{1}{2}I_1 I_2 f_3f_4$ and $A\times B = (-\tilde{A}B-\tilde{B}A)/2 = -\frac{\sqrt{3}}{2}\I_2 f_3f_4$.
	We have $p=2$ and $q=4$, so $p(q-1)$ is even, and $B_\perp = f_3f_4$.
	Note that $A\acom B = (e_1e_2 \acom f_1f_2)B_\perp  = -\cosh\btheta_{A,B} B_\perp$ has grade $6=q-p+4$,
	while $A\times B = (e_1e_2 \times f_1f_2)B_\perp = -\sinh\btheta_{A,B} B_\perp$ has grade $4=q-p+2$.
	Also, they commute, $(A\acom B)^2 = -\frac14$, $(A\times B)^2=\frac34$, $A^2 = -1$ and $B^2=1$. 
	
	With $e_1$ instead of $A$, we have $\btheta_{e_1,B} = \btheta_{e_1,f_1} = \frac\pi6 I_1$, $\cosh \btheta_{e_1,B} = \frac{\sqrt{3}}{2}$ and $\sinh \btheta_{e_1,B} = \frac{1}{2} I_1$. 
	Now $p=1$ and $q=4$, so $p(q-1)$ is odd, and $B_\perp = f_2f_3f_4$. 
	We find that $e_1\acom B = \frac{1}{2} I_1 f_2f_3f_4 = (e_1\times f_1) B_\perp = \sinh \btheta_{e_1,B} B_\perp$ has grade $5=q-p+2$, while $e_1\times B = \frac{\sqrt{3}}{2} f_2f_3f_4 = (e_1\acom f_1)B_\perp = \cosh \btheta_{e_1,B} B_\perp$ has grade $3=q-p$.
	They commute, $(e_1\acom B)^2 = \frac14$ and $(e_1\times B)^2=-\frac34$. 
\end{example}

\section*{Acknowledgments}

The author would like to thank Dr. K.\,Scharnhorst for his comments and for suggesting references, and the anonymous referees who encouraged the conversion of previous versions of the manuscript (and the author himself) to the formalism of geometric algebra.

\vspace{6pt}

\noindent
\textbf{Note.} This article has been posted to the arXiv e-print repository, with the identifier arXiv:1910.07327


\providecommand{\bysame}{\leavevmode\hbox to3em{\hrulefill}\thinspace}
\providecommand{\MR}{\relax\ifhmode\unskip\space\fi MR }
\providecommand{\MRhref}[2]{%
	\href{http://www.ams.org/mathscinet-getitem?mr=#1}{#2}
}
\providecommand{\href}[2]{#2}

\appendix

\section{Blade products in Grassmann algebra}\label{sc:Grassmann products}

We present some results of \Cref{sc:Subproducts and Grassmann angles} in terms of Grassmann algebra, to make them more accessible to researchers who might not be familiarized with Clifford algebra.
We provide direct proofs requiring (mostly) only \Cref{sc:preliminaries}.

Here $X$ can be a real or complex vector space, with inner product  $\inner{\cdot,\cdot}$ (Hermitian product in the complex case, with conjugate-linearity in the first argument).
The complex case is important for applications, but requires some adjustments in \Cref{sc:preliminaries}, which are indicated in footnotes.


Formulas relating the inner product of same grade blades to some angle are well known in the real case \cite{Gluck1967,Jiang1996}.
The following one also works in complex spaces and for distinct grades.

\begin{theorem}\label{pr:Grassmann angle blade product}
	$\inner{A, B} =  \| A\|\| B\|\cos\hat{\Theta}_{A,B}$ for blades $A,B\in\bigwedge X$.
\end{theorem}
\begin{proof}
	If grades are equal, follows from \Cref{pr:A*B theta_i} (as $\inner{A,B}=\tilde{A}*B$), \eqref{eq:Theta prod cos} and \Cref{df:oriented symm complem}. 
	Otherwise both sides vanish.
\end{proof}


The exterior product satisfies a similar formula.

\begin{theorem}\label{pr:norm wedge product}
	$\| A\wedge B\|=\| A\|\| B\| \cos\Theta^\perp_{[A],[B]}$ for blades $A,B\in\bigwedge X$. 
\end{theorem}
\begin{proof}
	Let $\P^\perp=\Proj_{B^\perp}$. Then $\| A\wedge B\| = \|(\P^\perp A)\wedge B\| = \|\P^\perp A\| \| B\|$, and the result follows from \Cref{pr:properties Grassmann}\emph{\ref{it:Theta Proj}}.
\end{proof}

This result corresponds to \eqref{eq:outer prod Theta perp}, and our comments about that formula (after \Cref{pr:contraction pperp}) also apply here.


Contraction or interior product by a vector is widely used in Geometry and Physics, but as its generalization for multivectors is less known, we give a brief description here.
The following contraction is related to that of \Cref{sc:Subproducts and Grassmann angles} by $A\lcontr B = \tilde{A}\glcontr B$.
For more details, see \cite{Dorst2007,Rosen2019}.

\begin{definition}
	The \emph{(left) contraction}
	\SELF{Our results adapt for a right one, given by $ B\,\rcontr A = (-1)^{p(q-p)}  A\lcontr B$.}
	$ A\lcontr B$ of $A\in\bigwedge^p X$ on $B \in\bigwedge^q X$ is the unique element of $\bigwedge^{q-p} X$ such that $\inner{ C, A \lcontr   B} = \inner{ A\wedge C, B}$ for all $C\in\bigwedge^{q-p} X$.
\end{definition}

If $p=q$ then $ A \lcontr   B = \inner{ A, B}$, so the contraction generalizes the inner product for distinct grades, but giving a $(q-p)$-vector instead of a scalar. 
\SELF{if $p=0$ then $ A \lcontr   B = \bar{ A}\,  B$}
For $p\neq q$ this product is asymmetric (in general, $ A\lcontr B \neq  B\lcontr A$), with $ A\lcontr B =  0$ if $p>q$.
In the complex case it is conjugate-linear in $ A$ and linear in $ B$.

Let $A\in\bigwedge^p X$ and $B \in\bigwedge^q X$ be nonzero blades, and $B = B_P\wedge B_\perp$ be a PO decomposition\footnote{See \Cref{sc:Subspaces of different dimensions}.} 
of $ B$ w.r.t.\! $A$.

\begin{proposition}\label{pr:contraction projective orthogonal}
	$A \lcontr B = \inner{A, B_P} \, B_\perp$.
\end{proposition}
\begin{proof}
	As $B_\perp$ is completely orthogonal to $A$, for any $C\in\bigwedge^{q-p} X$ we have $\inner{ A\wedge C, B_P\wedge B_\perp} = \inner{ A, B_P} \inner{C, B_\perp} = \inner{C, \inner{ A, B_P} B_\perp}$.
\end{proof}

So $ A\lcontr B$ performs an inner product of $ A$ with a subblade of $ B$ where it projects, leaving another subblade of $B$ completely orthogonal to $A$.

\begin{corollary}
	$B_\perp =  B_P\lcontr B/\|B\|^2$.
	\SELF{$=\frac{P A}{\|P A\|}\lcontr B$, and $ A=P A+ A'$ with $ A'\pperp B$}
\end{corollary}


%
%

\begin{theorem}\label{pr:contraction}
	$A \lcontr   B =  \| A\|\| B\|  \cos\Theta_{A,B}\, B_\perp$.
\end{theorem}
\begin{proof}
	Follows from \Cref{pr:Grassmann angle blade product} and \Cref{pr:contraction projective orthogonal,pr:Thetas different dim} if $p\leq q$, otherwise $\Theta_{A,B}=\frac \pi 2$ and both sides vanish. 
\end{proof}


\begin{corollary}\label{pr:contr orient}
	$A \lcontr B = \epsilon_{A,B} \|P_{B} A\| \|B\| \, B_\perp$.
\end{corollary}

\begin{corollary}
	$A\lcontr B=0 \Leftrightarrow A\pperp B$.
\end{corollary}


\section{Hyperbolic Functions of Multivectors}\label{sc:Hyperbolic Functions}

Hyperbolic functions of multivectors are defined as usual, in terms of exponentials or power series \cite{Hestenes1999}.

\begin{definition}
	For any $M\in\bigwedge X$, $\cosh M = \frac{e^M + e^{-M}}{2} = \sum\limits_{k=0}^\oo \frac{M^{2k}}{(2k)!}$ and $\sinh M = \frac{e^M - e^{-M}}{2}  = \sum\limits_{k=0}^\oo \frac{M^{2k+1}}{(2k+1)!}$.
\end{definition}

Though we only need the bivector case for \Cref{sc:Commutator and anticommutator}, we present properties of these funcions in more generality, as the literature on them is rather scarce (a few other properties, mostly for blades, can be found in \cite[p.\,77]{Hestenes1999}).

\begin{proposition}\label{pr:properties cosh sinh M}
	Let $M,N\in\bigwedge X$.
	\begin{enumerate}[i)]
		\item $\cosh M + \sinh M = e^M$. \label{it:cosh+sinh}
		\item $\cosh M - \sinh M = e^{-M}$. \label{it:cosh-sinh}
		\item $\cosh(-M) = \cosh M$ and $\sinh(-M) = -\sinh M$. \label{it:cosh sinh -M}
		\item $(\cosh M)^\sim = \cosh \tilde{M}$ and $(\sinh M)^\sim = \sinh \tilde{M}$. \label{it:cosh sinh reverse}
		\item If $M\times N=0$ then $\cosh(M)\times\sinh(N) = 0$. \label{it:coshxsinh}
		\item $\cosh^2 M - \sinh^2 M = 1$. \label{it:cosh2-sinh2}
	\end{enumerate}
\end{proposition}
\begin{proof}
	\emph{(\ref{it:cosh+sinh} -- \ref{it:cosh sinh reverse})} Immediate.
	\emph{(\ref{it:coshxsinh})} If $M\times N=0$ then $e^M e^N = e^{M+N}$, and so $\cosh M \sinh N = \left(e^{M+N}-e^{M-N}+e^{N-M}-e^{-M-N}\right)/4 = \sinh N \cosh M$. 
	\emph{(\ref{it:cosh2-sinh2})} $\cosh^2 M = (e^{2M}+2+e^{-2M})/4$ and $\sinh^2 M = (e^{2M}-2+e^{-2M})/4$.
\end{proof}

\begin{proposition}\label{pr:properties cosh sinh homog}
	Let $H\in\bigwedge^p X$ be homogeneous of grade $p$, and $r=p\!\!\mod 4$.\SELF{$M=i+jkl$, $\tilde{M}\neq\pm M$ e $\tilde{M}\times M\neq 0$}
	\begin{enumerate}[i)]
		\item $(\cosh H)^\sim = \cosh H$ and $(\sinh H)^\sim = (-1)^{\frac{p(p-1)}{2}}\sinh H$. \label{it:symmetry cosh sinh}
		\item $\cosh H \in \bigoplus\limits_{k\in\N} \bigwedge^{4k}X$ and $\sinh H \in \bigoplus\limits_{k\in\N} \bigwedge^{4k+r}X$. \label{it:grades cosh sinh}
		\item If $r\neq 0$ then $\cosh H * \sinh H = 0$. \label{it:cosh * sinh}	
		\item If $r = 0$ then \label{it:r0}
		$\begin{cases}
			\|\cosh H\|^2 + \|\sinh H\|^2 = \frac{\|e^H\|^2 + \|e^{-H}\|^2}{2}, \\
			\|\cosh H\|^2 - \|\sinh H\|^2 = 1, \\
			\|\cosh H\| \geq 1.	
		\end{cases}$ 
		\item If $r = 1$ then  \label{it:r1}
		$\begin{cases}
			\|\cosh H\|^2 + \|\sinh H\|^2 = \|e^H\|^2, \\
			\|\cosh H\|^2 - \|\sinh H\|^2 = 1, \\
			\|\cosh H\| \geq 1 \text{ and } \|e^H\| = \|e^{-H}\| \geq 1.		
		\end{cases}$
		\item If $r = 2$ or $3$ then \label{it:r23} 
		$\begin{cases}
			\|\cosh H\|^2 + \|\sinh H\|^2 = 1, \\
			\|\cosh H\|^2 - \|\sinh H\|^2 = \inner{e^H}_0^2, \\
			\|\cosh H\| \leq 1, \|\sinh H\| \leq 1 \text{ and } \|e^H\| = 1.
		\end{cases}$
	\end{enumerate}
\end{proposition}
\begin{proof}
	\emph{(\ref{it:symmetry cosh sinh})} Follows from \Cref{pr:properties cosh sinh M}\,\emph{\ref{it:cosh sinh -M}} and \emph{\ref{it:cosh sinh reverse}}, as $\tilde{H}=(-1)^{\frac{p(p-1)}{2}} H$.
	\emph{(\ref{it:grades cosh sinh})} Follows from \emph{\ref{it:symmetry cosh sinh}}, as components of $\cosh H$ are even, and those of $\sinh H$ have the parity of $p$.
	\emph{(\ref{it:cosh * sinh})} By \emph{\ref{it:grades cosh sinh}}, these functions have no components of same grade when $r\neq 0$.
	\emph{(\ref{it:r0})} $\tilde{H}=H$, so $4\|\cosh H\|^2 = (e^{\tilde{H}} + e^{-\tilde{H}}) * (e^H + e^{-H}) = \|e^H\|^2 + 2 + \|e^{-H}\|^2$ and $4\|\sinh H\|^2 = \|e^H\|^2 - 2 + \|e^{-H}\|^2$.
	\emph{(\ref{it:r1})} Likewise, but by \emph{\ref{it:grades cosh sinh}} and \Cref{pr:properties cosh sinh M}\emph{\ref{it:cosh+sinh}} we also have $\|\cosh H\|^2 + \|\sinh H\|^2 = \|e^H\|^2$, which implies $\|e^H\| = \|e^{-H}\|$.
	\emph{(\ref{it:r23})} Now $\tilde{H}=-H$, so that $4\|\cosh H\|^2 = 2 + \inner{e^{2H}+e^{-2H}}_0 = 2 + 2\inner{\cosh(2H)}_0$ and $4\|\sinh H\|^2 = 2 - 2\inner{\cosh(2H)}_0$, while \emph{\ref{it:grades cosh sinh}} and \Cref{pr:properties cosh sinh M}\emph{\ref{it:cosh+sinh}} imply $\inner{\cosh(2H)}_0 = \inner{e^{2H}}_0$. Finally, $\|e^H\|^2 = e^{\tilde{H}}*e^H = \inner{e^{-H}e^H}_0 = 1$.
\end{proof}

Note that \emph{(\ref{it:grades cosh sinh})}  and \Cref{pr:properties cosh sinh M}\emph{\ref{it:cosh+sinh}} restrict which grades $e^H$ can include, and when $r \neq 0$ its components are divided between $\cosh H$ and $\sinh H$ according to their grades.

\end{document}